\documentclass[onefignum,onetabnum]{siamart251216}

\usepackage{lipsum}
\usepackage{amsfonts}
\usepackage{graphicx}
\usepackage{epstopdf}
\usepackage{algorithmic}
\ifpdf
  \DeclareGraphicsExtensions{.eps,.pdf,.png,.jpg}
\else
  \DeclareGraphicsExtensions{.eps}
\fi

\newsiamremark{remark}{Remark}
\newsiamremark{hypothesis}{Hypothesis}
\crefname{hypothesis}{Hypothesis}{Hypotheses}
\newsiamthm{claim}{Claim}

\headers{Invariance Entropy in the Dust}{Senhan Yao}

\title{Invariance Entropy in the Dust\thanks{Submitted to arXiv as preprint. To be submitted to \emph{SIAM J. Control Optim.}
\funding{This work was supported by the National Natural Science Foundation of China under Grant No. 12288201.}}}

\author{Senhan Yao\thanks{State Key Laboratory of Mathematical Sciences, Academy of Mathematics and Systems Science, Chinese Academy of Sciences, Beijing 100190, People's Republic of China; School of Mathematical Sciences, University of Chinese Academy of Sciences, Beijing 100049, People's Republic of China (\email{yaosenhan@amss.ac.cn}).}}

\usepackage{amsopn}

\makeatletter
\newcommand*{\addFileDependency}[1]{% argument=file name and extension
  \typeout{(#1)}% latexmk will find this if $recorder=0 (however, in that case, it will ignore #1 if it is a .aux or .pdf file etc and it exists! if it doesn't exist, it will appear in the list of dependents regardless)
  \@addtofilelist{#1}% if you want it to appear in \listfiles, not really necessary and latexmk doesn't use this
  \IfFileExists{#1}{}{\typeout{No file #1.}}% latexmk will find this message if #1 doesn't exist (yet)
}
\makeatother

\ifpdf
\hypersetup{
  pdftitle={Invariance Entropy in the Dust},
  pdfauthor={Senhan Yao}
}
\fi

\begin{document}

\maketitle

% REQUIRED
\begin{abstract}
We answer negatively two natural general forms of Kawan’s questions on invariance entropy for control systems, open for more than fifteen years, by a single construction. We show that finite strict invariance entropy need not coincide with ordinary invariance entropy, and that strict invariance entropy need not be lower semicontinuous under Hausdorff perturbations of the initial set. The construction is a continuous-time control system in which a Cantor coordinate stores an infinite symbolic instruction, an exponentially contracting coordinate makes late mismatches geometrically invisible, and a compact matching graph forces exact symbolic agreement. It identifies a source of information complexity not generated by dynamical expansion, but by the persistence of exact viability constraints under thin invariant geometry and by the order of limits in invariance entropy.
\end{abstract}

% REQUIRED
\begin{keywords}
  invariance entropy, strict invariance entropy, nonlinear control systems, Hausdorff semicontinuity, Cantor sets
\end{keywords}

% REQUIRED
\begin{AMS}
  93C15, 94A17, 37B40
\end{AMS}

\section{Introduction}

\emph{How much information is needed to keep a system inside a prescribed set?} In classical control theory, one usually assumes that the controller has access to the state with unlimited precision and can transmit control commands without restriction \cite{Kawan2013,NairEvansMareelsMoran2004}. In a digitally networked system, this idealization is no longer available \cite{Kawan2013,GarciaKawanYuksel2021}. A controller receiving only finitely many bits over a finite time interval can distinguish only finitely many possible situations and can therefore select only finitely many open-loop control functions \cite{NairEvansMareelsMoran2004,ColoniusKawan2009}. The minimal rate at which the number of necessary controls grows is thus an intrinsic measure of the information required by the control task \cite{ColoniusKawan2009,Kawan2013}.

Motivated by communication-limited control and the search for an intrinsic information rate independent of a particular coder-controller architecture, Nair, Evans, Mareels, and Moran introduced topological feedback entropy for discrete-time systems, following the Adler--Konheim--McAndrew open-cover paradigm \cite{NairEvansMareelsMoran2004,Kawan2013}. Colonius and Kawan subsequently introduced \emph{invariance entropy} for continuous-time systems, in a form closer to the Bowen--Dinaburg spanning-set formulation of topological entropy \cite{ColoniusKawan2009,Kawan2013}. Rather than fixing a communication protocol or feedback law, invariance entropy counts the smallest number of open-loop controls needed to keep every initial state in \(K\) inside, or arbitrarily close to, a controlled invariant set \(Q\) up to time \(T\), and measures their exponential growth rate as \(T\to\infty\) \cite[Sec.~1 and Sec.~3]{ColoniusKawan2009}. The analogy with topological entropy is therefore structural: the spanning objects are controls rather than orbit segments, and the controlled dynamics is not an autonomous system on the state space \cite{Kawan2013,ColoniusKawanNair2013}. This structural analogy was made precise by Huang and Zhong, who introduced Carath\'eodory--Pesin structures associated with control systems and showed that both invariance entropy and topological feedback entropy are quantities generated by special such structures, thereby revealing their dimension-type character \cite[Abstract and Sec.~1]{HuangZhong2018}.

Since its introduction by Colonius and Kawan \cite[Sec.~1]{ColoniusKawan2009}, continuous-time invariance entropy has been related to topological feedback entropy and minimal data rates \cite{ColoniusKawanNair2013,Kawan2013}, bounded in terms of Lipschitz constants, dimension, divergence, escape rates, and volume growth \cite{Kawan2011UpperLower,Kawan2011LowerBounds,Kawan2013}, and characterized by exact or variational formulas for linear systems, control sets, and uniformly hyperbolic chain control sets \cite{ColoniusKawan2009,Kawan2011ControlSets,DaSilvaKawan2016Hyperbolic}. Extensions include invariance pressure and measure-theoretic variants \cite[Abstract and Sec.~1]{ColoniusCossichSantana2018,WangHuangSun2019}, while related entropy notions have been developed for state estimation, exponential and practical stabilization, and the robustness of critical bit rates under perturbations \cite{Kawan2017StateEstimation,Colonius2012MinimalBitRates,ColoniusHamzi2021,DaSilvaKawan2016Robustness}.

These developments show that information complexity in control systems is not exhausted by a single exponential growth rate \cite[Abstract and Sec.~1]{HuangZhong2018,WangHuangSun2019}. In the positive-rate theory, it is often traced to dynamical instability: unstable eigenvalues, positive Lyapunov exponents, and volume expansion force an increasing number of trajectories, controls, or messages to be distinguished \cite{ColoniusKawan2009,DaSilvaKawan2016Hyperbolic,Kawan2011LowerBounds}. The dimension-type viewpoint has recently been pushed further to control systems with zero invariance entropy, through Bowen and packing \(\alpha\)-invariance entropies, invariance entropy dimensions, and corresponding measure-theoretic variational principles \cite[Abstract and Sec.~1]{ChenHuangZhong2026}.

The phenomenon studied here is different again. It comes from the distinction between keeping trajectories exactly inside a controlled invariant set and keeping them arbitrarily close to that set. Although exact invariance and approximate invariance are clearly different over a fixed finite time interval, it is not evident whether this difference can survive after one passes to the long-time information rate. This leads to two natural questions:
\begin{itemize}
\item If the exact invariance task has a finite asymptotic information rate, must it have the same rate as the corresponding approximate invariance task?
\item Do these asymptotic information rates behave continuously, or at least semicontinuously, when the set of initial states and the target invariant set are perturbed in the Hausdorff metric?
\end{itemize}
Both questions were explicitly raised in Kawan's early development of invariance entropy. Open Question~2.1.15 asks whether finite strict invariance entropy forces equality between strict and ordinary invariance entropy, while Open Question~2.2.14 asks whether ordinary or strict invariance entropy depends continuously on the initial set and/or the invariant target set in some sense and under some condition \cite[p.~49 and Open Question~2.1.15 on p.~58; p.~66 and Open Question~2.2.14]{KawanThesis2009}. Both questions are motivated by a common stability intuition: geometric closeness at every fixed finite horizon should be reflected in the corresponding infinite-time information rates.

\emph{A single construction, based on a Cantor dust, gives definitive negative answers to two natural general forms of Kawan's longstanding questions: finite strict invariance entropy need not coincide with ordinary invariance entropy, and strict invariance entropy need not be lower semicontinuous under Hausdorff perturbations of the initial set.} More precisely, Theorem~\ref{thm:main-separation} answers Kawan's first open question \cite[p.~49 and Open Question~2.1.15 on p.~58]{KawanThesis2009} negatively, while Theorem~\ref{thm:hausdorff-failure} gives a negative answer to the unconditional Hausdorff lower semicontinuity version of Kawan's second open question \cite[p.~66 and Open Question~2.2.14]{KawanThesis2009}.

\emph{More fundamentally, the example exhibits a source of information complexity that is not generated by dynamical expansion.} In our system, the positive information rate is not generated by local exponential separation or volume expansion; instead, the information lies in determining which open-loop control must be followed in order to remain exactly on a geometrically thin invariant set. The geometry is arranged so that one state variable contracts, and hence the visible effect of later control choices becomes exponentially small.

Distinct control requirements therefore become arbitrarily close in the state-space geometry without becoming interchangeable for exact invariance. Every positive tolerance eventually conceals these distinctions, and the finite Cantor skeletons used in the approximation retain only finitely many independent symbolic choices, whereas the limiting Cantor dust preserves an inexhaustible sequence of exact control requirements. Thus the information required for exact invariance does not arise from instability: it survives in the dust.

\section{Kawan's questions}

To formulate Kawan's questions in the terminology used throughout the paper, we recall the standard spanning-set definitions of Colonius and Kawan \cite{ColoniusKawan2009,Kawan2013}. The exact-versus-ordinary equality problem is Kawan's Open Question~2.1.15, and the continuity problem with respect to \(K\) and/or \(Q\) is Kawan's Open Question~2.2.14 \cite[p.~58 and p.~66]{KawanThesis2009}.

The relevant distinction is between exact containment and containment in an outer neighborhood. \emph{Strict invariance entropy} requires the controlled trajectory to remain exactly in \(Q\). The \emph{standard invariance entropy}, which we also call the \emph{ordinary} or \emph{non-strict invariance entropy}, requires only that the trajectory remain in an arbitrarily small outer neighborhood of \(Q\), with the tolerance sent to zero after the long-time exponential growth rate has been taken. Throughout, \(\log\) denotes the natural logarithm.

Throughout this section, let \(M\) be a metric state space with metric \(d\), and let \(U\subseteq \mathbb R^m\) be a compact control range. An \emph{admissible control function} is a Lebesgue measurable map with values in \(U\), and the set of all admissible controls is denoted by
\[
\mathcal U
:=
\bigl\{
u:[0,\infty)\to U:
u \text{ is measurable and } u(t)\in U
\text{ for a.e. } t\ge 0
\bigr\}.
\]
We reserve \(U\) for the compact control range and \(\mathcal U\) for admissible open-loop controls. Controls which agree almost everywhere are identified; in particular, all equalities of controls are understood a.e. For \(x\in M\) and \(u\in\mathcal U\), the corresponding trajectory is denoted by \(\varphi(t,x,u)\) and is understood in the Carath\'{e}odory sense on the time intervals under consideration. Thus, when a finite family of admissible controls is used below, one control from that family may be selected according to the initial state, but after this selection the control is open-loop on the time interval under consideration. A compact set \(Q\subseteq M\) is called \emph{weakly invariant} if, for every \(x\in Q\), there exists \(u\in\mathcal U\) such that
\[
    \varphi(t,x,u)\in Q
    \qquad\text{for all }t\ge0 .
\]
Here \emph{weakly invariant} is synonymous with the standard term \emph{controlled invariant} used in much of the invariance-entropy literature; the adjective \emph{weakly} only stresses that the condition is existential in the control. For \(\varepsilon>0\), we write
\[
    N_\varepsilon(Q)
    :=
    \{x\in M:\operatorname{dist}(x,Q)<\varepsilon\},
    \qquad
    \operatorname{dist}(x,Q):=\inf_{q\in Q}d(x,q).
\]

More precisely, let \(K\subseteq Q\) be compact and \(T>0\). A finite set \(S\subseteq\mathcal U\) is called \emph{\((T,K,Q)\)-spanning} if for every \(x\in K\) there exists \(u\in S\) such that
\[
    \varphi(t,x,u)\in Q
    \qquad\text{for all }t\in[0,T].
\]
The smallest cardinality of such a family is denoted by \(r_{\mathrm{inv}}^*(T,K,Q)\), with the convention that this number is \(\infty\) if no finite spanning family exists. We use the convention \(\log\infty=\infty\). The strict invariance entropy is
\begin{equation}\label{eq:02.01}
h_{\mathrm{inv}}^*(K,Q) := \limsup_{T\to\infty}
\frac{1}{T}\log r_{\mathrm{inv}}^*(T,K,Q).
\end{equation}
For \(\varepsilon>0\), a finite set \(S\subseteq\mathcal U\) is called \emph{\((T,\varepsilon,K,Q)\)-spanning} if for every \(x\in K\) there exists \(u\in S\) such that
\[
    \varphi(t,x,u)\in N_\varepsilon(Q)
    \qquad\text{for all }t\in[0,T].
\]
The smallest cardinality of such a family is denoted by \(r_{\mathrm{inv}}(T,\varepsilon,K,Q)\), again with the convention that this number is \(\infty\) if no finite spanning family exists. Then
\[
    h_{\mathrm{inv}}(\varepsilon,K,Q)
    :=
    \limsup_{T\to\infty}
    \frac{1}{T}\log r_{\mathrm{inv}}(T,\varepsilon,K,Q),
\]
and the invariance entropy is
\begin{equation}\label{eq:02.02}
h_{\mathrm{inv}}(K,Q):= \lim_{\varepsilon\downarrow0}
h_{\mathrm{inv}}(\varepsilon,K,Q).
\end{equation}
The limit exists in \([0,\infty]\), since \(h_{\mathrm{inv}}(\varepsilon,K,Q)\) is monotone nondecreasing as \(\varepsilon\downarrow0\). Crucially, the infinite-time growth rate is taken before the tolerance is sent to zero.

Exact invariance is the stronger requirement, and therefore 
\begin{equation}\label{eq:02.03}
h_{\mathrm{inv}}(K,Q)
\leq
h_{\mathrm{inv}}^*(K,Q).
\end{equation}
It is much less clear, however, whether the two quantities can differ when the strict entropy is finite. For every fixed finite horizon, continuous dependence on initial conditions under a fixed control seems to bring exact and approximate invariance close together. Moreover, the tolerance in the definition of \(h_{\mathrm{inv}}(K,Q)\) is eventually sent to zero. These observations motivate the following natural possibility, explicitly left open in the early theory \cite[p.~49 and Open Question~2.1.15 on p.~58]{KawanThesis2009}:
\begin{equation}\label{eq:02.04}
h_{\mathrm{inv}}^*(K,Q)<\infty
\quad\Longrightarrow\quad
h_{\mathrm{inv}}(K,Q)=h_{\mathrm{inv}}^*(K,Q).
\end{equation}

A second question concerns the stability of invariance entropy under perturbations of the sets that define the control task. This is also an explicit issue in Kawan's thesis: Open Question~2.2.14 asks whether \(h_{\mathrm{inv}}^{*}(K,Q)\) depends continuously on \(K\) and/or \(Q\) ``in some sense and under some condition'' \cite[p.~66 and Open Question~2.2.14]{KawanThesis2009}. The present paper tests this open continuity problem in the compact-pair setting. For compact sets, the Hausdorff metric provides the natural topology in which to formulate this question. Hausdorff convergence, however, controls only the static geometry of the sets; it does not directly compare the controls required to keep trajectories viable over increasingly long time intervals. It is therefore natural to ask whether geometric convergence nevertheless implies at least upper or lower semicontinuity of the associated asymptotic information rates.

\section{Main Construction and Mechanism} 

We now describe the construction and the mechanism behind both negative answers. The purpose of this section is to make the geometry and the information structure transparent. Precise definitions and proofs of the properties stated below are given in the subsequent section.

The system has three state variables, ordered as \((z,c,y)\). Variable \(c\) stores an infinite symbolic instruction, \(z\) progressively suppresses the geometric effect of that instruction, and \(y\) records whether the applied control follows it. The target set is designed so that exact invariance forces the control to follow the prescribed symbolic sequence at every stage, even when a wrong control choice would produce only an arbitrarily small displacement from the set.

\subsection{Frozen Code}

We begin by building an infinite symbolic instruction into the initial
state. Let \(C\) be the standard middle-third Cantor set. For the purposes of the construction, one may think of \(C\) as a geometric realization of the space of all infinite binary sequences. Thus each point \(c\in C\) carries a sequence
\begin{equation}\label{eq:03.01}
    \sigma(c)
=
\bigl(\sigma_1(c),\sigma_2(c),\ldots\bigr),
\qquad
\sigma_j(c)\in\{-1,1\}.
\end{equation}
For convenience, and especially for readers less familiar with the Cantor set, the precise symbolic coding of \(C\) and its compactness are recalled in Subsection~\ref{subsection:cantor_set}.

To store the symbolic code permanently in the state, we introduce a state coordinate
\(c\) and prescribe its dynamics by \(\dot{c} = 0\). We call \(c\) the \emph{Cantor coordinate}, although as a state variable it ranges over an open interval containing the Cantor set \(C\); only the initial set and the target set constructed below restrict this coordinate to \(C\). Once the initial condition selects \(c\), the associated sequence
\(\sigma(c)\) remains frozen throughout the evolution. 

At this
stage the sequence imposes no restriction on the control; the target set
introduced below will convert this frozen code into a time-dependent matching
requirement.

\subsection{Exponential Fading}

The coordinate \(z\) satisfies \(\dot z=-z, z(0)=1\), and hence \(z(t)=e^{-t}\). This variable has two roles. First, it records the passage of time, since \(t=-\log z(t)\). Second, it provides a continuously shrinking geometric scale. A discrepancy occurring at a late time is multiplied by a much smaller value of \(z\) than the same discrepancy occurring near the initial time.

The third coordinate \(y\) is governed by \(\dot y=-y+zu, u\in[-1,1]\). Thus the control enters the \(y\)-dynamics through the decaying factor \(z(t)=e^{-t}\). Different controls may remain completely distinct as functions of time, while the differences they produce in the state become progressively smaller.

At this point the full controlled system has been specified on the Euclidean state space \(M=\mathbb R^3\), with coordinates \((z,c,y)\) and the Euclidean metric. The control range is \(U=[-1,1]\), and \(\varphi(t,x,u)\) denotes the Carath\'{e}odory solution corresponding to the admissible control \(u\in\mathcal U\):
\begin{equation}\label{eq:03.02}
    \dot z=-z,\qquad
\dot c=0,\qquad
\dot y=-y+zu,
\qquad u\in[-1,1].
\end{equation}
For this system, admissible controls are precisely the elements of
\(L^\infty_{\mathrm{loc}}([0,\infty),[-1,1])\), modulo equality a.e.

We now choose the set of initial states
\begin{equation}\label{eq:03.03}
K:=\{(1,c,0):c\in C\}.
\end{equation}
Since the set \(K\) is a continuous image of the compact set \(C\), \(K\) is compact. Thus every trajectory starts with the same visible coordinates \(z=1\) and
\(y=0\); the initial states differ only in the frozen symbolic code carried by
\(c\). This isolates the source of information complexity: no distinction is
encoded in the initial values of the contracting and controlled coordinates,
and all future control requirements originate from the Cantor coordinate.

\medskip

\subsection{Matching Graph}

The Cantor coordinate stores an infinite symbolic instruction, but so far
this instruction has no effect on the motion. We now design the target set
so that remaining in it forces the control to ``follow the stored symbols''. 

For each \(c\in C\), the sequence \(\sigma(c)\) is interpreted as an infinite control schedule: The first symbol prescribes
the control during the first unit time interval, the second symbol prescribes
the control during the second interval, and so on. Accordingly, we define
the piecewise constant reference signal
\begin{equation}\label{eq:03.04}
a(t,c)=\sigma_j(c)
\qquad
\text{for }t\in[j-1,j),\quad j\geq1.
\end{equation}
Thus, following the code stored in \(c\) means choosing
\begin{equation}\label{eq:03.05}
u(t)=a(t,c)
\end{equation}
for almost every \(t\). The phrase ``for almost every \(t\)'' is essential:
changing a measurable control at isolated times does not change the
corresponding trajectory.

The role of the target set \(Q\) is to test this matching condition
geometrically. For each pair \((z,c) \in [0,1] \times C\), the graph admits exactly one value of the
third coordinate \(y\). We define \(Q\) as a graph over the \((z,c)\)-coordinates, so that once \((z,c)\) is fixed there is only one admissible value of \(y\). This single allowed value is chosen to be exactly the \(y\)-coordinate produced by the perfectly matching control. Thus a trajectory following the prescribed signal remains on the graph, while any mismatch on a set of positive measure prevents the trajectory from satisfying the graph relation for all times in the interval. The construction of \(Q\) is given in Subsection~\ref{subsection:matching_properties}. Moreover, \(K \subseteq Q\) and \(Q\) is compact.

\subsection{Strict and Approximate Invariance}

The graph \(Q\) has been designed so that exact invariance is equivalent to exact symbolic matching. If an initial state carries the code \(c\), then keeping its trajectory in \(Q\) forces the chosen open-loop control function \(u(\cdot)\) to reproduce the symbols of \(\sigma(c)\) on the corresponding unit time intervals. Thus, to keep every initial state in \(K\) inside \(Q\) up to time \(T\), a finite set of admissible open-loop control functions must contain enough different functions to reproduce the first \(\lceil T\rceil\) symbols of every code carried by \(K\).

Because \(K\) contains the full Cantor family of codes, every word of length \(\lceil T\rceil\) over the alphabet \(\{-1,1\}\) occurs as the initial block of some code. Two different such words cannot in general be handled by the same open-loop control function. On the first unit interval where they differ, that function would have to take the value \(1\) in order to match one code and the value \(-1\) in order to match the other. This is impossible. Hence at least one control function is needed for each possible word of length \(\lceil T\rceil\). Conversely, this number of control functions is sufficient. For each word of length \(\lceil T\rceil\), choose one open-loop control function that reproduces this word on the corresponding unit intervals, and assign to each initial state the function whose word agrees with the first \(\lceil T\rceil\) symbols of its code. Then the trajectory remains on \(Q\) up to time \(T\). Therefore
\begin{equation}\label{eq:03.06}
r_{\mathrm{inv}}^*(T,K,Q)=2^{\lceil T\rceil}.
\end{equation}
Every additional unit of time introduces one further binary distinction that exact invariance must preserve. Consequently,
\begin{equation}\label{eq:03.07}
h_{\mathrm{inv}}^*(K,Q)
=
\limsup_{T\to\infty}
\frac{1}{T}\log 2^{\lceil T\rceil}
=
\log 2.
\end{equation}

Approximate invariance treats the same symbolic errors differently because it measures their geometric effect rather than their exact occurrence. Suppose that a control reproduces the prescribed symbols during the first \(N\) unit intervals and is arbitrary afterwards. Then every possible mismatch between the chosen control and the prescribed signal occurs only after time \(N\). By that time, the factor \(z(t)=e^{-t}\) has already become uniformly small, and any later discrepancy enters the \(y\)-equation only after being multiplied by this small factor. Consequently, the trajectory may leave the matching graph after time \(N\), but it can do so only by a uniformly small amount. As shown in the detailed proof in Subsection~\ref{subsection:fading_estimate}, the distance from the trajectory to \(Q\) is bounded by a quantity of order \(e^{-N}\), independently of how long the trajectory is followed.

This is where the order of limits in the definition of \(h_{\mathrm{inv}}(K,Q)\) becomes decisive. For a fixed tolerance \(\varepsilon>0\), choose \(N\) so large that all errors occurring after the first \(N\) unit intervals remain within the \(\varepsilon\)-neighborhood of \(Q\). Once this \(N\) is fixed, the time horizon \(T\) is allowed to go to infinity, but no new symbols beyond the first \(N\) have to be distinguished at that resolution.

At this fixed tolerance, one does not need to distinguish the whole infinite code. It is enough to distinguish the finite prefix that remains visible at scale \(\varepsilon\). More concretely, for each word
\[
\omega=(\omega_1,\ldots,\omega_N)\in\{-1,1\}^N,
\]
choose one open-loop control function \(u_\omega\) which takes the value \(\omega_j\) on the \(j\)-th unit time interval, \(j=1,\ldots,N\), and is defined arbitrarily afterwards. This gives a finite catalogue of \(2^N\) admissible control functions.

Now take any initial state in \(K\), carrying some code \(c\). Let \(\omega\) be the first \(N\) symbols of \(\sigma(c)\), and assign to this initial state the control function \(u_\omega\). Up to time \(N\), the assigned control matches the prescribed signal exactly, so the trajectory stays on the graph \(Q\). After time \(N\), the control may no longer match the remaining symbols of \(\sigma(c)\), but all such later mismatches occur at the exponentially small scale \(z(t)=e^{-t}\). By the preceding estimate, the resulting deviation from \(Q\) remains below \(\varepsilon\), independently of the final time horizon \(T\). Therefore the same catalogue of \(2^N\) control functions works for every \(T\).

Equivalently, for this fixed tolerance \(\varepsilon\), the minimal cardinality \(r_{\mathrm{inv}}(T,\varepsilon,K,Q)\) of a set of admissible open-loop control functions that keeps every trajectory within the \(\varepsilon\)-neighborhood of \(Q\) satisfies
\begin{equation}\label{eq:03.09}
    r_{\mathrm{inv}}(T,\varepsilon,K,Q)\le 2^N
\quad\text{for all }T>0.
\end{equation}
Since \(r_{\mathrm{inv}}(T,\varepsilon,K,Q)\ge 1\), it follows that
\begin{equation}\label{eq:03.10}
\limsup_{T\to\infty}
\frac{1}{T}\log r_{\mathrm{inv}}(T,\varepsilon,K,Q)
=
0.
\end{equation}
Only after this infinite-time growth rate has been computed do we let \(\varepsilon\downarrow0\). Decreasing the tolerance may force the prefix length \(N\) to increase, but it does not create exponential growth in \(T\). Therefore
\begin{equation}\label{eq:03.11}
h_{\mathrm{inv}}(K,Q)
=
\lim_{\varepsilon\downarrow0}
\limsup_{T\to\infty}
\frac{1}{T}\log r_{\mathrm{inv}}(T,\varepsilon,K,Q)
=
0.
\end{equation}
We record the conclusion as the first main theorem of the paper.

\begin{theorem}[Separation of exact and approximate invariance entropy]\label{thm:main-separation}
For the control system \eqref{eq:03.02} on \(M=\mathbb R^3\), equipped with the Euclidean metric and control range \(U=[-1,1]\), and for the compact sets \(K\) and \(Q\) defined in \eqref{eq:03.03} and \eqref{eq:04.16}, respectively, one has \(K\subseteq Q\), \(Q\) is weakly invariant, \(h_{\mathrm{inv}}(K,Q)=0\), and \(h_{\mathrm{inv}}^*(K,Q)=\log 2\). In particular, finite strict invariance entropy need not coincide with ordinary invariance entropy.
\end{theorem}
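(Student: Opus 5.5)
The proof has four steps: construct \(Q\) explicitly, verify the structural claims, and then prove the two entropy identities \eqref{eq:03.06} and \eqref{eq:03.09} sketched above.

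\textbf{Construction of \(Q\).} For \(c\in C\) and \(t\ge0\), let \(Y(t,c)\) be the \(y\)-coordinate of the solution from \((1,c,0)\) under the matching control \(a(\cdot,c)\). By variation of constants,
\[
Y(t,c)=\int_0^t e^{-(t-s)}e^{-s}a(s,c)\,ds=e^{-t}\int_0^t a(s,c)\,ds .
\]
Writing \(z=e^{-t}\), I would define \(g(z,c):=z\int_0^{-\log z}a(s,c)\,ds\) for \(z\in(0,1]\) and \(g(0,c):=0\). Then I would set
\[
Q:=\{(z,c,g(z,c)) : z\in[0,1],\ c\in C\}.
\]
Compactness of \(Q\) reduces to continuity of \(g\) on \([0,1]\times C\). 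Continuity in \(c\) holds because \(\sigma_j\) is locally constant on \(C\), so \(c\mapsto\int_0^t a(s,c)\,ds\) is locally constant for each fixed \(t\). Continuity in \(z\) is clear for \(z>0\). At \(z=0\) it follows from the bound \(|g(z,c)|\le z|\log z|\to0\).

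\textbf{Structural claims.} Clearly \(g(1,c)=0\), so \(K\subseteq Q\). For weak invariance, start at any \((z_0,c,g(z_0,c))\in Q\) with \(z_0=e^{-t_0}>0\) and apply the shifted control \(u(t)=a(t+t_0,c)\). By the same formula the trajectory stays on the graph. At \(z_0=0\) one has \(z\equiv0\), and with any control \(y\) stays at \(0=g(0,c)\).

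\textbf{Strict entropy.} The lower bound needs the following rigidity lemma. Suppose \(\varphi(t,(1,c,0),u)\in Q\) for all \(t\in[0,T]\). Then
\[
e^{-t}\int_0^t u(s)\,ds=e^{-t}\int_0^t a(s,c)\,ds \quad\text{for all } t\in[0,T],
\]
since \(z(t)=e^{-t}\) and \(c\) are unchanged, so the graph relation pins down \(y(t)\). Differentiating (Lebesgue differentiation) gives \(u=a(\cdot,c)\) a.e. on \([0,T]\). Now fix two codes whose first \(\lceil T\rceil\) symbols differ at some index \(j\le\lceil T\rceil\). One open-loop control serving both would have to equal both \(1\) and \(-1\) a.e. on \([j-1,\min(j,T)]\). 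This interval has positive length whenever \(j-1<T\), which holds because \(j\le\lceil T\rceil\). So distinct words need distinct controls, and surjectivity of \(\sigma\) onto \(\{-1,1\}^{\mathbb N}\) gives \(r^*_{\mathrm{inv}}\ge 2^{\lceil T\rceil}\). The matching upper bound follows from choosing \(u_\omega\) equal to the word \(\omega\) on each unit interval. The two bounds give \eqref{eq:03.06}, and hence \(\log 2\).

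\textbf{Ordinary entropy.} This is the main obstacle: the fading estimate must hold uniformly in \(T\). Let \(u\) match \(a(\cdot,c)\) on \([0,N]\). For \(t\ge N\),
\[
|y(t)-g(e^{-t},c)|=e^{-t}\Bigl|\int_N^t \bigl(u-a\bigr)\,ds\Bigr|\le 2(t-N)e^{-t}\le 2e^{-N}\sup_{r\ge0} re^{-r}=2e^{-N-1}.
\]
The point \((e^{-t},c,g(e^{-t},c))\) lies in \(Q\), so the distance from the trajectory to \(Q\) is at most \(2e^{-N-1}\) for every \(t\). Choosing \(N\) with \(2e^{-N-1}<\varepsilon\) gives \eqref{eq:03.09}. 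Hence \(h_{\mathrm{inv}}(\varepsilon,K,Q)=0\) for every \(\varepsilon\), so \(h_{\mathrm{inv}}(K,Q)=0\).
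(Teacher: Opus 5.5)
Your proposal is correct and follows essentially the same route as the paper. It uses the same graph \(Q\), the same rigidity argument (the graph relation gives \(\int_0^t u=\int_0^t a(\cdot,c)\), hence \(u=a(\cdot,c)\) a.e.) yielding \(r^*_{\mathrm{inv}}(T,K,Q)=2^{\lceil T\rceil}\), and the same fading bound \(2(t-N)e^{-t}\le 2e^{-N-1}\) that makes the \(2^N\) prefix controls spanning for every \(T\). One small tightening: compactness of \(Q\) needs joint rather than separate continuity of \(g\), and this holds because on \([0,R]\) only the first \(\lceil R\rceil\) symbols matter, so \(c\mapsto\int_0^t a(s,c)\,ds\) is locally constant uniformly for \(t\in[0,R]\); this is what the paper's pasting-over-strips argument does.
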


\subsection{Hausdorff Approximation}

The same construction also gives a negative answer to the Hausdorff stability question. The guiding observation is simple: Hausdorff convergence detects geometric closeness, but strict invariance entropy detects how many independent control choices remain present over arbitrarily long time intervals. A finite approximation may be very close to the Cantor dust as a set, while having already lost the infinite symbolic tail responsible for the positive strict entropy.

We construct the approximating sets from the finite symbolic stages of the
Cantor set. For each word
\(\omega=(\omega_1,\ldots,\omega_n)\in\{-1,1\}^n\), let
\[
    \omega^+
    :=
    (\omega_1,\ldots,\omega_n,1,1,1,\ldots),
    \qquad
    c_\omega^+:=\pi(\omega^+).
\]
Here \(\pi\) denotes the standard symbolic coding map of the middle-third Cantor set, defined in \eqref{eq:04.02}. Thus \(c_\omega^+\in C\) is the Cantor point whose code begins with \(\omega\)
and then has the constant tail \(1,1,1,\ldots\). Define
\begin{equation}\label{eq:03.12}
    C_n:=\{c_\omega^+:\omega\in\{-1,1\}^n\},
    \qquad
    K_n:=\{(1,c,0):c\in C_n\}.
\end{equation}
Thus \(C_n\) contains exactly one representative from each symbolic cylinder
of length \(n\). Equivalently, \(C_n\) is the finite right-endpoint skeleton of
the \(n\)-th stage of the Cantor construction. In particular, \(C_n\subseteq C\)
is finite, and hence \(K_n\) is compact. The fact that \(C_n\to C\) in the
Hausdorff metric, and consequently \(K_n\to K\), is verified in
Subsection~\ref{subsection:hausdorff_convergence}.

The strict entropy, however, behaves very differently. For a fixed \(n\), the set \(K_n\) contains only \(n\) independent binary choices; all symbols after time \(n\) are fixed to \(1\). During the first \(n\) unit time intervals, different initial states may require different open-loop control functions, according to their different symbolic prefixes. After time \(n\), all codes represented in \(C_n\) have the same tail. Hence no new independent control choice appears after the first \(n\) intervals. 

We now count the control functions needed for exact invariance of \(K_n\). The set \(K_n\) has only \(2^n\) initial states, one for each word \(\omega\in\{-1,1\}^n\). Moreover, once \(\omega\) is fixed, the entire symbolic code of the corresponding point \(c_\omega^+\) is fixed as well: it begins with \(\omega\) and then has the constant tail \(1,1,1,\ldots\). For each word \(\omega\), choose one open-loop control function \(u_\omega\) that follows this entire code: it reproduces \(\omega\) during the first \(n\) unit intervals and then follows this constant tail afterwards. Assign \(u_\omega\) to the initial state \((1,c_\omega^+,0)\). By the defining property of the matching graph \(Q\), a trajectory whose control follows the code stored in its Cantor coordinate remains exactly on \(Q\). Hence each initial state in \(K_n\) can be kept in \(Q\) for all time by one of the \(2^n\) functions in this catalogue. Therefore the same catalogue works for every time horizon \(T>0\). In other words,
\begin{equation}\label{eq:03.13}
r_{\mathrm{inv}}^*(T,K_n,Q)\leq 2^n \qquad \text{for all }T>0.
\end{equation}
This bound is uniform in \(T\). Since also \(r_{\mathrm{inv}}^*(T,K_n,Q)\geq1\), its exponential growth rate in time is zero: 
\begin{equation}\label{eq:03.14}
h_{\mathrm{inv}}^*(K_n,Q) = \limsup_{T\to\infty} \frac{1}{T} \log r_{\mathrm{inv}}^*(T,K_n,Q) = 0 \qquad \text{for every }n.
\end{equation}

The Hausdorff limit restores what every finite approximation has lost. The full Cantor set \(C\) has no last free symbol. On every successive unit time interval, the family of initial states in \(K\) presents a new independent binary choice. From the preceding subsection, \( h_{\mathrm{inv}}^*(K,Q)=\log 2\). Thus the strict invariance entropy jumps upward at the Hausdorff limit: 
\begin{equation}\label{eq:03.15}
    K_n\longrightarrow K, \qquad h_{\mathrm{inv}}^*(K_n,Q)=0 \quad\text{for all }n, \qquad h_{\mathrm{inv}}^*(K,Q)=\log 2.
\end{equation}
We record this conclusion as the second main theorem.

\begin{theorem}[Failure of Hausdorff lower semicontinuity]\label{thm:hausdorff-failure}
For the same control system \eqref{eq:03.02}, the target set \(Q\) in \eqref{eq:04.16}, the limiting initial set \(K\) in \eqref{eq:03.03}, and the approximating initial sets \(K_n\) in \eqref{eq:03.12}, the sets \(K_n\subseteq Q\) converge to \(K\) in the Hausdorff metric, but \(\liminf_{n\to\infty} h_{\mathrm{inv}}^*(K_n,Q) < h_{\mathrm{inv}}^*(K,Q)\). More precisely, \(h_{\mathrm{inv}}^*(K_n,Q)=0\) for every \(n\), while \(h_{\mathrm{inv}}^*(K,Q)=\log 2\). In particular, strict invariance entropy is not lower semicontinuous with respect to unconditional Hausdorff perturbations of the initial set, with the target set \(Q\) fixed.
\end{theorem}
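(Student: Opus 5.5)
The plan is to prove Theorem~\ref{thm:hausdorff-failure} in three independent pieces, each relying on the properties of the matching graph \(Q\) from \eqref{eq:04.16}.

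\emph{Inclusion \(K_n\subseteq Q\).} This is immediate from \(C_n\subseteq C\) and \(K\subseteq Q\), which are part of Theorem~\ref{thm:main-separation}.

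\emph{Hausdorff convergence \(K_n\to K\).} First I would show \(C_n\to C\) in the Hausdorff metric. Since \(C_n\subseteq C\), one half of the Hausdorff distance vanishes. For the other half, take \(c=\pi(\sigma)\in C\) and compare it with \(c^+_\omega\), where \(\omega=(\sigma_1,\dots,\sigma_n)\). Both points lie in the same \(n\)-th stage interval of the Cantor construction, which has length \(3^{-n}\). Hence \(|c-c^+_\omega|\le 3^{-n}\), and so \(d_H(C_n,C)\le 3^{-n}\). The map \(c\mapsto(1,c,0)\) is an isometry onto its image, so \(d_H(K_n,K)=d_H(C_n,C)\to0\).

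\emph{Computing \(h^*_{\mathrm{inv}}(K_n,Q)=0\).} For each word \(\omega\in\{-1,1\}^n\), I would take \(u_\omega(t)=a(t,c^+_\omega)\). This is piecewise constant with values in \([-1,1]\), so it is admissible. The key input is the exact-matching property of \(Q\): if \(u(t)=a(t,c)\) a.e., then \(\varphi(t,(1,c,0),u)\in Q\) for all \(t\ge0\). To verify this, I would write the trajectory explicitly as \(z=e^{-t}\), \(c\) constant, and
\[
y(t)=\int_0^t e^{-(t-s)}e^{-s}a(s,c)\,ds=e^{-t}\int_0^t a(s,c)\,ds.
\]
Then I would check that this is precisely the graph value over \((e^{-t},c)\) defining \(Q\). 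So \(\{u_\omega\}\) is \((T,K_n,Q)\)-spanning for every \(T\), which gives \(1\le r^*_{\mathrm{inv}}(T,K_n,Q)\le 2^n\) and hence a growth rate of \(0\).

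\emph{Computing \(h^*_{\mathrm{inv}}(K,Q)=\log2\).} This value is taken directly from Theorem~\ref{thm:main-separation}. The strict inequality \(\liminf_n h^*_{\mathrm{inv}}(K_n,Q)=0<\log2\) then follows.

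\emph{Main obstacle.} The main obstacle lies upstream, in the lower bound \(r^*_{\mathrm{inv}}(T,K,Q)\ge2^{\lceil T\rceil}\) used in Theorem~\ref{thm:main-separation}. It requires that remaining on the graph forces \(u=a(\cdot,c)\) a.e. The argument I would give is that if \(y(t)\) equals the graph value for all \(t\in[0,T]\), then \(e^{t}y(t)=\int_0^t u\) and \(\int_0^t a\) agree for every \(t\). Differentiating gives \(u=a\) a.e. on \([0,T]\). Two codes that differ on some interval \([j-1,j)\) with \(j\le\lceil T\rceil\) would then force a single control to equal both \(1\) and \(-1\) on a set of positive measure, which is impossible.

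One caveat about the horizon: the interval \([\lceil T\rceil-1,T]\) has positive length when \(T\) is not an integer. When \(T\) is an integer, the count uses \(T\) intervals instead, but the entropy limit is unaffected.

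For the present theorem I would simply cite these facts. The only genuinely new work is the uniform-in-\(T\) bound for \(K_n\) and the Hausdorff estimate.
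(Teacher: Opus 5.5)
Your proposal is correct and follows the paper's proof essentially step for step. The steps match: the \(3^{-n}\) tail estimate with the isometry \(c\mapsto(1,c,0)\) for Hausdorff convergence, the all-time matching controls \(a(\cdot,c_\omega^+)\) giving the \(T\)-uniform bound \(1\le r^*_{\mathrm{inv}}(T,K_n,Q)\le 2^n\), and Theorem~\ref{thm:main-separation} for \(h^*_{\mathrm{inv}}(K,Q)=\log 2\). Your horizon caveat is unnecessary, since \([\lceil T\rceil-1,T]\) has positive length for every \(T>0\), integer or not.
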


\section{Verification of Facts}

The preceding section described the construction and derived its consequences at the level of the main mechanism. We now verify the auxiliary facts that were used there without proof.

\subsection{Cantor Set}
\label{subsection:cantor_set}

We use a symbolic coding of the standard middle-third Cantor set. Let
\begin{equation}\label{eq:04.01}
    \Sigma:=\{-1,1\}^{\mathbb N}
\end{equation}
be endowed with the product topology. Define
\begin{equation}\label{eq:04.02}
    \pi:\Sigma\longrightarrow [0,1],
    \qquad
    \pi(\omega)
    :=
    \sum_{j=1}^{\infty}\frac{\omega_j+1}{3^j},
    \qquad
    \omega=(\omega_1,\omega_2,\ldots)\in\Sigma .
\end{equation}
Thus the symbol \(-1\) corresponds to the ternary digit \(0\), while the
symbol \(1\) corresponds to the ternary digit \(2\). We define
\begin{equation}\label{eq:04.03}
    C:=\pi(\Sigma).
\end{equation}
Equivalently, \(C\) consists precisely of those points of \([0,1]\) whose
ternary expansion uses only the digits \(0\) and \(2\).

The map \(\pi\) is continuous, since the defining series converges uniformly.
It is also injective. Indeed, if \(\omega,\omega'\in\Sigma\) are distinct, let
\(m\) be the first index such that \(\omega_m\neq \omega'_m\). Then
\[
    |\pi(\omega)-\pi(\omega')|
    \geq
    \frac{2}{3^m}
    -
    \sum_{j=m+1}^{\infty}\frac{2}{3^j}
    =
    \frac{1}{3^m}
    >
    0.
\]
Since \(\Sigma\) is compact and \([0,1]\) is Hausdorff, \(\pi\) is a
homeomorphism from \(\Sigma\) onto \(C\). In particular, \(C\) is compact.

We now define the symbolic code carried by a point of the Cantor set. Since
\(\pi:\Sigma\to C\) is a homeomorphism, for every \(c\in C\) there exists a
unique sequence \(\omega\in\Sigma\) such that \(\pi(\omega)=c\). We set
\begin{equation}\label{eq:04.05}
    \sigma(c)
    :=
    \pi^{-1}(c)
    =
    \bigl(\sigma_1(c),\sigma_2(c),\ldots\bigr)
    \in\{-1,1\}^{\mathbb N}.
\end{equation}
Thus \(\sigma_j(c)\) denotes the \(j\)-th symbol in the unique symbolic code of
\(c\).

We next distinguish between the interval stages of the Cantor construction and
the finite approximating sets used later. Let
\begin{equation}\label{eq:04.06}
    \mathcal C_0=[0,1],
    \qquad
    \mathcal C_{n+1}
    :=
    \frac13\mathcal C_n
    \cup
    \left(\frac23+\frac13\mathcal C_n\right).
\end{equation}
Then
\begin{equation}\label{eq:04.07}
    C=\bigcap_{n=0}^{\infty}\mathcal C_n.
\end{equation}
Thus \(\mathcal C_n\) denotes the usual \(n\)-th closed stage of the
middle-third Cantor construction.

For a word \(\omega=(\omega_1,\ldots,\omega_n)\in\{-1,1\}^n\), define
\begin{equation}\label{eq:04.08}
    I_\omega
    :=
    \left[
        \sum_{j=1}^{n}\frac{\omega_j+1}{3^j},
        \sum_{j=1}^{n}\frac{\omega_j+1}{3^j}+3^{-n}
    \right].
\end{equation}
Then
\begin{equation}\label{eq:04.09}
    \mathcal C_n
    =
    \bigcup_{\omega\in\{-1,1\}^n} I_\omega .
\end{equation}
The interval \(I_\omega\) is the \(n\)-th level Cantor interval corresponding
to the symbolic word \(\omega\).

We now define the finite approximating sets. For each
\(\omega=(\omega_1,\ldots,\omega_n)\in\{-1,1\}^n\), set
\begin{equation}\label{eq:04.11}
    \omega^+
    :=
    (\omega_1,\ldots,\omega_n,1,1,1,\ldots)\in\Sigma,
    \qquad
    c_\omega^+:=\pi(\omega^+).
\end{equation}
Equivalently, \(c_\omega^+\) is the right endpoint of the Cantor interval
\(I_\omega\). We define
\begin{equation}\label{eq:04.12}
    C_n:=\{c_\omega^+:\omega\in\{-1,1\}^n\}.
\end{equation}
Thus \(C_n\) is the finite right-endpoint skeleton of the \(n\)-th Cantor stage
\(\mathcal C_n\). In particular,
\[
    C_n\subseteq C,
    \qquad
    C_n \text{ is finite},
    \qquad
    |C_n|=2^n .
\]

\subsection{Matching Graph}
\label{subsection:matching_properties}

We first define the accumulated reference signal by
\begin{equation}\label{eq:04.14}
q(t,c)
:=
\int_0^t a(s,c)\,ds .
\end{equation}
If the control follows the stored instruction, then the \(y\)-coordinate starting from \(y(0)=0\) is \(e^{-t}q(t,c)\). Since \(z(t)=e^{-t}\), equivalently \(t=-\log z(t)\), this matching value can be written as a function of \((z,c)\):
\begin{equation}\label{eq:04.15}
F(z,c)
:=
zq(-\log z,c),
\qquad z>0,
\qquad
F(0,c)=0.
\end{equation}
This motivates the definition of the target graph
\begin{equation}\label{eq:04.16}
Q
:=
\bigl\{
(z,c,F(z,c)):
z\in[0,1],\ c\in C
\bigr\}.
\end{equation}
We first verify that this definition indeed gives a continuous graph, despite the jumps of the reference signal in time.

\begin{proposition}[Continuity of the matching graph]
\label{prop:F_continuous}
The map \(F:[0,1]\times C\to\mathbb R\) defined by \eqref{eq:04.15} is continuous.
\end{proposition}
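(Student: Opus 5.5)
The plan is to split \([0,1]\times C\) into the region \(z>0\), where \(F\) is a composition of explicit maps, and the boundary \(z=0\), where a uniform smallness bound does the work.

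\textbf{Key estimate.} Since \(|a(s,c)|=1\) for all \(s\) and \(c\), we have \(|q(t,c)|\le t\) and \(|q(t,c)-q(t',c)|\le |t-t'|\). Hence
\[
|F(z,c)|\le z\,|\log z| \qquad\text{for } z\in(0,1],
\]
and this bound is uniform in \(c\).

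\textbf{The boundary \(z=0\).} Because \(z|\log z|\to0\) as \(z\downarrow0\), the key estimate gives \(F(z,c)\to0=F(0,c_0)\) whenever \((z,c)\to(0,c_0)\), uniformly in \(c\). So \(F\) is continuous at every point with \(z=0\).

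\textbf{The region \(z>0\).} The map \(z\mapsto -\log z\) is a homeomorphism from \((0,1]\) onto \([0,\infty)\). It therefore suffices to show that \((t,c)\mapsto q(t,c)\) is continuous on \([0,\infty)\times C\); then \(F(z,c)=z\,q(-\log z,c)\) is a product of continuous functions. To prove this, fix \((t_0,c_0)\) and choose an integer \(n>t_0+1\). Write
\[
|q(t,c)-q(t_0,c_0)|\le |q(t,c)-q(t_0,c)|+|q(t_0,c)-q(t_0,c_0)| .
\]
The first term is at most \(|t-t_0|\) by the Lipschitz bound. For the second term, note that \(\pi\) is a homeomorphism onto \(C\). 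So if \(|c-c_0|\) is small enough, then \(\sigma(c)\) and \(\sigma(c_0)\) share their first \(n\) symbols; concretely, the injectivity estimate \(|\pi(\omega)-\pi(\omega')|\ge 3^{-m}\) shows that \(|c-c_0|<3^{-n}\) suffices. In that case \(a(\cdot,c)=a(\cdot,c_0)\) on \([0,n)\), so \(q(t_0,c)=q(t_0,c_0)\). This gives joint continuity of \(q\).

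\textbf{Main obstacle.} The only real subtlety is that \(a\) is discontinuous in \(t\), which is exactly why the graph might be expected to tear. The obstacle disappears once one works with the integrated quantity \(q\), which is \(1\)-Lipschitz in \(t\). The locally constant dependence of finite prefixes on \(c\) then removes the \(c\)-dependence locally. The step needing the most care is the uniformity in \(c\) at \(z=0\), and the bound \(z|\log z|\) handles it directly.
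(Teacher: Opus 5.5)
Your proof is correct and follows essentially the same route as the paper: the same split into $z>0$ (composition with a jointly continuous $q$) and $z=0$ (the uniform bound $|F(z,c)|\le z|\log z|$). The only minor difference is how you get joint continuity of $q$. You combine the $1$-Lipschitz bound in $t$ with local constancy of finite prefixes, obtained from the $3^{-m}$ separation estimate. The paper instead writes $q(s,c)=\sum_{j\le m}\sigma_j(c)+(s-m)\sigma_{m+1}(c)$ on each strip $[m,m+1]\times C$ and applies the pasting lemma.
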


\begin{proof}
For every \(j\geq1\), the coordinate map
\[
    \sigma_j:C\longrightarrow \{-1,1\}
\]
is continuous. Indeed, \(\sigma_j\) is the composition of the homeomorphism
\(\pi^{-1}:C\to\Sigma\) with the \(j\)-th coordinate projection on the product
space \(\Sigma\). Equivalently, the sets on which finitely many symbols are
prescribed are Cantor cylinders, and these cylinders are clopen in \(C\).

We next show that \(q\) is continuous on every compact time interval. Fix
\(R>0\). If \(m\in\mathbb N\cup\{0\}\) and \(s\in[m,m+1]\), then, by the
definition of \(a\),
\[
    q(s,c)
    =
    \sum_{j=1}^{m}\sigma_j(c)
    +(s-m)\sigma_{m+1}(c),
\]
with the convention that the empty sum is zero when \(m=0\). The right-hand
side is continuous in \((s,c)\) on \([m,m+1]\times C\), because it involves only
finitely many continuous coordinate maps \(\sigma_j\). On overlaps of adjacent
strips, namely at integer values of \(s\), the two formulas agree. Since only
finitely many strips meet \([0,R]\times C\), the pasting lemma gives continuity
of \(q\) on \([0,R]\times C\).

It follows immediately that \(F\) is continuous at every point with \(z>0\),
since there \(s=-\log z\) ranges over a finite time interval locally and
\[
    F(z,c)=zq(-\log z,c)
\]
is a composition of continuous maps.

It remains to check continuity at \(z=0\). For all \(s\geq0\) and all \(c\in C\),
\[
    |q(s,c)|
    \leq
    \int_0^s |a(r,c)|\,dr
    \leq s,
\]
because \(a(r,c)\in\{-1,1\}\). Therefore, for \(0<z\leq1\),
\[
    |F(z,c)|
    =
    z|q(-\log z,c)|
    \leq
    z|\log z|.
\]
The last quantity tends to \(0\) as \(z\downarrow0\), uniformly in \(c\in C\).
Since \(F(0,c)=0\) for every \(c\in C\), this proves continuity at all points of
\(\{0\}\times C\). Hence \(F\) is continuous on \([0,1]\times C\).
\end{proof}

Since \(F\) is continuous on \([0,1]\times C\) and \(C\) is compact, the graph \(Q\) is compact; moreover \(F(1,c)=q(0,c)=0\), so \(K\subseteq Q\).

We also record explicitly that the target set itself, not only the initial set
\(K\), is weakly invariant.

\begin{proposition}[Weak invariance of \(Q\)]
\label{prop:Q_weakly_invariant}
The set \(Q\) is weakly invariant for the control system \eqref{eq:03.02}. More
precisely, for every initial point \(x_0\in Q\), there exists an admissible
control \(u\in L^\infty_{\mathrm{loc}}([0,\infty),[-1,1])\) such that the
corresponding solution remains in \(Q\) for all \(t\geq0\).
\end{proposition}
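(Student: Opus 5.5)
The plan is to write down an explicit control for each initial point of \(Q\). It will be the reference signal \(a(\cdot,c)\) shifted in time by the "age" encoded in the \(z\)-coordinate. Then I verify, using uniqueness of Carath\'eodory solutions of the linear \(y\)-equation, that the resulting trajectory traces the graph of \(F\). Fix \(x_0=(z_0,c,F(z_0,c))\in Q\) with \(z_0\in[0,1]\) and \(c\in C\). Since \(\dot c=0\) and \(\dot z=-z\), every control gives \(c(t)=c\in C\) and \(z(t)=z_0e^{-t}\in[0,1]\). So the only thing to check is that \(y(t)=F(z(t),c)\) for all \(t\ge0\).

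\emph{Case \(z_0=0\).} Here \(F(0,c)=0\), so \(y(0)=0\), and \(z(t)\equiv0\). The \(y\)-equation reduces to \(\dot y=-y\), whose unique solution is \(y\equiv0=F(0,c)\). Any admissible control works, for instance \(u\equiv1\).

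\emph{Case \(z_0>0\).} Put \(t_0:=-\log z_0\ge0\) and define \(u(t):=a(t_0+t,c)\). This \(u\) is piecewise constant with values in \(\{-1,1\}\), so it lies in \(L^\infty_{\mathrm{loc}}([0,\infty),[-1,1])\). The candidate \(y\)-trajectory is
\[
\psi(t):=z_0e^{-t}\,q(t_0+t,c)=F\bigl(z_0e^{-t},c\bigr),
\]
where the second equality holds because \(-\log(z_0e^{-t})=t_0+t\). Then \(\psi(0)=z_0q(t_0,c)=F(z_0,c)=y(0)\). Moreover, \(q(\cdot,c)\) is Lipschitz (it is an integral of a bounded function), so \(\psi\) is absolutely continuous on compact intervals. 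The product rule then holds a.e. and gives
\[
\dot\psi(t)=-\psi(t)+z_0e^{-t}a(t_0+t,c)=-\psi(t)+z(t)u(t).
\]
So \(\psi\) is a Carath\'eodory solution of the \(y\)-equation with the same initial value as \(y\).

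The equation \(\dot y=-y+z(t)u(t)\) is linear with locally integrable forcing, so its Carath\'eodory solution is unique; equivalently, one can compare both with the variation-of-constants formula. Hence \(y\equiv\psi\), and therefore \(\varphi(t,x_0,u)=(z(t),c,F(z(t),c))\in Q\) for all \(t\ge0\). The only delicate point is justifying the a.e.\ differentiation of \(q(t_0+t,c)\) across the jump times of \(a\). I would handle it by noting that \(q(\cdot,c)\) is piecewise affine and continuous, as in the formula in the proof of Proposition~\ref{prop:F_continuous}. Everything else is direct computation.
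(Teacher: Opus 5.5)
Your proposal is correct and follows essentially the paper's proof. It uses the same time-shifted control \(u(t)=a(t_0+t,c)\) with \(t_0=-\log z_0\) and the same separate treatment of \(z_0=0\); the only difference is that the paper integrates \(\frac{d}{dt}(e^t y)=z_0u\) directly using the additivity \(q(t_0+t,c)=q(t_0,c)+\int_0^t a(t_0+r,c)\,dr\), whereas you check that \(F(z_0e^{-t},c)\) solves the \(y\)-equation and invoke uniqueness, which amounts to the same computation.
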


\begin{proof}
Let
\[
    x_0=(z_0,c,F(z_0,c))\in Q,
    \qquad z_0\in[0,1],\quad c\in C.
\]
First assume that \(z_0>0\), and set
\[
    s_0=-\log z_0.
\]
Choose the admissible control
\[
    u(t)=a(s_0+t,c),
    \qquad t\geq0.
\]
Since \(a(s_0+\cdot,c)\) is measurable and takes values in \(\{-1,1\}\), this
control is admissible, namely \(u\in L^\infty_{\mathrm{loc}}([0,\infty),[-1,1])\). The first two equations of \eqref{eq:03.02} give
\[
    z(t)=z_0e^{-t},
    \qquad
    c(t)=c.
\]
For the \(y\)-coordinate, using \(y(0)=F(z_0,c)=z_0q(s_0,c)\), we have
\[
    \frac{d}{dt}\bigl(e^t y(t)\bigr)
    =
    e^t z(t)u(t)
    =
    z_0u(t)
\]
for a.e. \(t\geq0\). Hence
\[
    y(t)
    =
    e^{-t}
    \left(
        z_0q(s_0,c)
        +z_0\int_0^t a(s_0+r,c)\,dr
    \right).
\]
By the definition of \(q\),
\[
    q(s_0+t,c)
    =
    q(s_0,c)+\int_0^t a(s_0+r,c)\,dr.
\]
Therefore
\[
    y(t)
    =
    z_0e^{-t}q(s_0+t,c).
\]
Since
\[
    -\log(z_0e^{-t})=s_0+t,
\]
we obtain
\[
    y(t)
    =
    F(z_0e^{-t},c)
    =
    F(z(t),c(t)).
\]
Thus
\[
    (z(t),c(t),y(t))
    =
    (z(t),c,F(z(t),c))
    \in Q
\]
for all \(t\geq0\).

It remains to consider \(z_0=0\). In this case \(F(0,c)=0\), so
\(x_0=(0,c,0)\). Choose, for instance, \(u(t)\equiv0\). Then \(z(t)=0\),
\(c(t)=c\), and the \(y\)-equation reduces to \(\dot y=-y\) with \(y(0)=0\), so
\(y(t)=0\) for all \(t\geq0\). Hence
\[
    (z(t),c(t),y(t))=(0,c,0)=(0,c,F(0,c))\in Q
\]
for all \(t\geq0\). This proves weak invariance of \(Q\).
\end{proof}

The next corollary is the special case, starting from \(K\), used in the exact entropy count.

\begin{corollary}[Matching controls stay on the graph]
\label{cor:matching_control_stays_on_graph}
Let \(c\in C\), and let \(u\in L^\infty_{\mathrm{loc}}([0,\infty),[-1,1])\)
satisfy \(u(t)=a(t,c)\) for a.e. \(t\geq 0\). Then the solution of \eqref{eq:03.02} with initial condition \((z(0),c(0),y(0))=(1,c,0)\) satisfies \((z(t),c(t),y(t))\in Q\) for all \(t\geq 0\).
\end{corollary}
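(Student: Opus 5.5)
This is the case \(z_0=1\) of Proposition~\ref{prop:Q_weakly_invariant}, with one extra point: the control \(u\) is only assumed to agree with \(a(\cdot,c)\) almost everywhere, not identically. The plan is to show that the trajectory generated by \(u\) coincides with the one generated by \(a(\cdot,c)\), and then read off membership in \(Q\) from the explicit formula.

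First, the initial point lies on the graph. With \(x_0=(1,c,0)\) we have \(z_0=1\), \(s_0=-\log 1=0\), and \(F(1,c)=q(0,c)=0\), so \(x_0=(1,c,F(1,c))\in Q\). The first two equations of \eqref{eq:03.02} give \(z(t)=e^{-t}\) and \(c(t)=c\), independently of the control.

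Next, the \(y\)-equation. For a.e. \(t\) we have \(\frac{d}{dt}(e^ty(t))=e^tz(t)u(t)=u(t)\). The function \(e^ty(t)\) is absolutely continuous in the Carath\'eodory sense, so integrating gives
\[
y(t)=e^{-t}\int_0^t u(r)\,dr.
\]
Since \(u=a(\cdot,c)\) a.e., the Lebesgue integrals agree:
\[
\int_0^t u(r)\,dr=\int_0^t a(r,c)\,dr=q(t,c).
\]
Hence \(y(t)=e^{-t}q(t,c)=F(e^{-t},c)=F(z(t),c(t))\) for every \(t\ge0\), since \(e^{-t}\in(0,1]\). Therefore \((z(t),c(t),y(t))\in Q\) for all \(t\ge0\).

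Equivalently, one may note that the Carath\'eodory solution depends on the control only through its equivalence class modulo null sets. The trajectory is then exactly the one constructed in Proposition~\ref{prop:Q_weakly_invariant} with \(z_0=1\), \(s_0=0\), \(u(t)=a(t,c)\).

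The only step needing care is this a.e. identification. Its justification is that the solution formula involves \(u\) solely through \(\int_0^t u\), which is insensitive to null-set modifications, so there is no real obstacle.
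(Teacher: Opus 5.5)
Your proof is correct and follows essentially the paper's route: the paper also treats this as the case \(z_0=1\) (so \(s_0=0\)) of Proposition~\ref{prop:Q_weakly_invariant}, and then observes that changing the control on a null set does not change the Carath\'eodory solution. Your variation-of-constants computation, in which \(u\) enters only through \(\int_0^t u\), is simply an explicit form of that null-set remark, and it mirrors the paper's proof of Lemma~\ref{lem:finite-horizon-matching}.
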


\begin{proof}
This is the case \(z_0=1\) of Proposition~\ref{prop:Q_weakly_invariant}. Then
\(s_0=-\log z_0=0\), so the matching control constructed there is
\(a(t,c)\). Changing a control on a null set does not change the corresponding
Carathéodory solution, and hence the same conclusion holds for every
\(u=a(\cdot,c)\) a.e.
\end{proof}

\begin{lemma}[Finite-horizon matching]
\label{lem:finite-horizon-matching}
Let \(T>0\), let \(c\in C\), and let \(u\in\mathcal U\) satisfy \(u(t)=a(t,c)\) for a.e. \(t\in[0,T]\). Then the solution of \eqref{eq:03.02} with initial condition \((z(0),c(0),y(0))=(1,c,0)\) satisfies \((z(t),c(t),y(t))\in Q\) for every \(t\in[0,T]\).
\end{lemma}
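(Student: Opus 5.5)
The proof reduces to Corollary~\ref{cor:matching_control_stays_on_graph} by a causality argument: the trajectory on \([0,T]\) depends only on the restriction of the control to \([0,T]\).

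Given \(u\in\mathcal U\) with \(u=a(\cdot,c)\) a.e. on \([0,T]\), define the concatenated control
\[
\tilde u(t):=\begin{cases}u(t), & t\in[0,T],\\ a(t,c), & t>T.\end{cases}
\]
It is measurable and takes values in \([-1,1]\), so \(\tilde u\in\mathcal U\). Moreover \(\tilde u(t)=a(t,c)\) for a.e. \(t\geq0\). By Corollary~\ref{cor:matching_control_stays_on_graph}, the solution with control \(\tilde u\) starting at \((1,c,0)\) stays in \(Q\) for all \(t\ge0\).

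It remains to show that the solutions for \(u\) and \(\tilde u\) coincide on \([0,T]\). This is the only point needing care, though it is routine here.

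\begin{itemize}
\item The \(z\)- and \(c\)-equations do not involve the control, so \(z(t)=e^{-t}\) and \(c(t)=c\) for both controls.
\item The \(y\)-equation is linear, and variation of constants gives the explicit Carath\'eodory solution
\[
y(t)=e^{-t}\int_0^t e^{s}e^{-s}u(s)\,ds=e^{-t}\int_0^t u(s)\,ds .
\]
For \(t\le T\) this depends only on \(u|_{[0,t]}\), which agrees a.e. with \(\tilde u|_{[0,t]}\).
\item By the same formula, \(y(t)=e^{-t}q(t,c)=F(e^{-t},c)\) for \(t\in[0,T]\).
\end{itemize}

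The last item also gives a direct verification, bypassing the corollary: \((z(t),c(t),y(t))=(z(t),c,F(z(t),c))\in Q\) for \(t\in[0,T]\). I would present this explicit computation as the proof, mentioning the concatenation argument as an alternative.
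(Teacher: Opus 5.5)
Your proposal is correct, and the explicit computation you say you would present, namely \(z(t)=e^{-t}\), \(c(t)=c\), and \(y(t)=e^{-t}\int_0^t u(s)\,ds=e^{-t}q(t,c)=F(e^{-t},c)\) on \([0,T]\) by variation of constants, is exactly the paper's proof. The concatenation route through Corollary~\ref{cor:matching_control_stays_on_graph} is also valid but adds nothing. The same explicit formula is what shows the two trajectories coincide on \([0,T]\), and once you have it the conclusion follows directly.
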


\begin{proof}
For \(0\leq t\leq T\), the first two state equations give \(z(t)=e^{-t}\) and \(c(t)=c\), while variation of constants yields
\[
    y(t)
    =
    e^{-t}\int_0^t u(s)\,ds
    =
    e^{-t}\int_0^t a(s,c)\,ds
    =
    F(e^{-t},c).
\]
Hence \((z(t),c(t),y(t))=(e^{-t},c,F(e^{-t},c))\in Q\) for every \(t\in[0,T]\).
\end{proof}

\begin{proposition}[The graph forces symbolic matching]
\label{prop:graph_forces_symbolic_matching}
Let \(T>0\), let \(c\in C\), and let
\(u=\tilde u|_{[0,T]}\) be the restriction to \([0,T]\) of an admissible control \(\tilde u\in\mathcal U\). Consider the solution of \eqref{eq:03.02}
with initial condition \((z(0),c(0),y(0))=(1,c,0)\). If \((z(t),c(t),y(t))\in Q\) for all \(t\in[0,T]\), then \(u(t)=a(t,c)\) for a.e. \(t\in[0,T]\). Consequently, any mismatch between \(u\) and \(a(\cdot,c)\) on a set of
positive measure prevents the trajectory from remaining in \(Q\) on the whole
interval \([0,T]\).
\end{proposition}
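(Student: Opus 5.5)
The plan is to exploit the graph structure of \(Q\): membership in \(Q\) pins the \(y\)-coordinate to a single value determined by \((z,c)\). Along the solution, the first two equations of \eqref{eq:03.02} give \(z(t)=e^{-t}\) and \(c(t)=c\) for all \(t\in[0,T]\). In particular, the Cantor coordinate never changes, so if the trajectory lies in \(Q\) at time \(t\), the unique point of \(Q\) with coordinates \((e^{-t},c)\) is \((e^{-t},c,F(e^{-t},c))\). One subtlety is that \(Q\) is a graph over \([0,1]\times C\) and \(c\in C\) is fixed, so there is no ambiguity from other points of \(C\). Hence the hypothesis forces
\[
y(t)=F(e^{-t},c)=e^{-t}q(t,c)\qquad\text{for all }t\in[0,T].
\]

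Next I will compute \(y\) by variation of constants, exactly as in the proof of Lemma~\ref{lem:finite-horizon-matching}. Since \(\frac{d}{dt}(e^ty(t))=e^tz(t)u(t)=u(t)\) for a.e. \(t\) and \(y(0)=0\), we get
\[
y(t)=e^{-t}\int_0^t u(s)\,ds .
\]
Comparing with the forced value and multiplying by \(e^t\) yields
\[
\int_0^t\bigl(u(s)-a(s,c)\bigr)\,ds=0\qquad\text{for every }t\in[0,T].
\]
The function \(g:=u-a(\cdot,c)\) is bounded and measurable on \([0,T]\), hence integrable, and its indefinite integral vanishes identically.

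The final step is the only point needing a cited fact. By the Lebesgue differentiation theorem, the derivative of \(t\mapsto\int_0^t g\) equals \(g(t)\) for a.e. \(t\), and this derivative is zero, so \(g=0\) a.e. Alternatively, the integrals of \(g\) over all subintervals vanish, so by a monotone class or regularity argument they vanish over all measurable sets, forcing \(g=0\) a.e. Thus \(u=a(\cdot,c)\) a.e. on \([0,T]\). The "consequently" clause is just the contrapositive: a mismatch on a set of positive measure makes \(\int_0^t g\neq0\) for some \(t\), so the trajectory is off \(Q\) at that time. I do not expect a genuine obstacle. The only care needed is the a.e.\ bookkeeping and noting that the restriction to \([0,T]\) of an admissible control is still bounded and measurable, so the Carath\'eodory solution formula applies.
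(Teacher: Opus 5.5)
Your proposal is correct and follows the paper's proof essentially step for step. The graph relation pins \(y(t)=e^{-t}q(t,c)\), variation of constants gives \(y(t)=e^{-t}\int_0^t u(s)\,ds\), and the vanishing of the indefinite integral of \(u-a(\cdot,c)\) forces \(u=a(\cdot,c)\) a.e. The paper cites the fundamental theorem of calculus for absolutely continuous functions, which is the same fact as your Lebesgue differentiation step.
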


\begin{proof}
As before,
\[
    z(t)=e^{-t},
    \qquad
    c(t)=c,
\]
and the solution of the \(y\)-equation satisfies
\[
    y(t)
    =
    e^{-t}\int_0^t u(s)\,ds .
\]
On the other hand, the assumption that the trajectory remains in \(Q\) gives
\[
    y(t)
    =
    F(z(t),c)
    =
    F(e^{-t},c)
    =
    e^{-t}q(t,c)
    =
    e^{-t}\int_0^t a(s,c)\,ds
\]
for every \(t\in[0,T]\). Hence
\[
    \int_0^t u(s)\,ds
    =
    \int_0^t a(s,c)\,ds
    \qquad
    \text{for every }t\in[0,T].
\]
Equivalently,
\[
    \int_0^t \bigl(u(s)-a(s,c)\bigr)\,ds=0
    \qquad
    \text{for every }t\in[0,T].
\]
The function \(u-a(\cdot,c)\) belongs to \(L^\infty([0,T])\). By the
fundamental theorem of calculus for absolutely continuous functions, the
identity above implies
\[
    u(t)=a(t,c)
    \qquad
    \text{for a.e. }t\in[0,T].
\]

The final statement follows immediately: if \(u\) differs from \(a(\cdot,c)\)
on a set of positive measure in \([0,T]\), then the equality above cannot hold,
and therefore the trajectory cannot remain in \(Q\) for every \(t\in[0,T]\).
\end{proof}

\subsection{Fading Estimate}
\label{subsection:fading_estimate}

We now prove the estimate used in the approximate invariance argument. The
point is that a mismatch occurring after time \(N\) can affect the
\(y\)-coordinate only through the exponentially decaying factor \(e^{-t}\).

\begin{proposition}[Fading estimate]
Let \(c\in C\), \(N\in\mathbb N\), and \(u\in L^\infty_{\mathrm{loc}}([0,\infty),\allowbreak [-1,1])\). Consider the solution of \eqref{eq:03.02} with initial condition \((z(0),c(0),y(0))=(1,c,0)\). Assume that \(u(t)=a(t,c)\) for a.e. \(t\in[0,N]\). Distances below are computed with respect to the Euclidean metric on \(M=\mathbb R^3\). Then, for every \(t\geq0\),
\begin{equation}
    \operatorname{dist}\bigl((z(t),c(t),y(t)),Q\bigr)
    \leq
    \frac{2}{e}e^{-N}.
\end{equation}
More precisely,
\begin{equation}
    \operatorname{dist}\bigl((z(t),c(t),y(t)),Q\bigr)=0
    \qquad
    \text{for }0\leq t\leq N,
\end{equation}
and
\begin{equation}
    \operatorname{dist}\bigl((z(t),c(t),y(t)),Q\bigr)
    \leq
    2(t-N)e^{-t}
    \qquad
    \text{for }t\geq N.
\end{equation}
\end{proposition}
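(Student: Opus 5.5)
The plan is to compare the actual trajectory with the matching trajectory for the same code \(c\) at the same time \(t\), and to use the point \((z(t),c,F(z(t),c))\in Q\) as the competitor in the distance. The first two coordinates are \(z(t)=e^{-t}\) and \(c(t)=c\) for every \(t\geq0\), whatever the control is. The distance to \(Q\) is therefore bounded by the single vertical gap
\[
\operatorname{dist}\bigl((z(t),c,y(t)),Q\bigr)\le \bigl|y(t)-F(e^{-t},c)\bigr| .
\]

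For \(0\le t\le N\), the hypothesis \(u=a(\cdot,c)\) a.e. on \([0,N]\) lets us apply Lemma~\ref{lem:finite-horizon-matching} with \(T=N\). This gives \((z(t),c(t),y(t))\in Q\), so the distance is zero.

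For \(t\ge N\), variation of constants gives \(y(t)=e^{-t}\int_0^t u(s)\,ds\), and by definition \(F(e^{-t},c)=e^{-t}q(t,c)=e^{-t}\int_0^t a(s,c)\,ds\). Subtracting, the contributions on \([0,N]\) cancel because \(u=a(\cdot,c)\) a.e. there. We are left with
\[
|y(t)-F(e^{-t},c)| = e^{-t}\Bigl|\int_N^t \bigl(u(s)-a(s,c)\bigr)\,ds\Bigr|\le 2(t-N)e^{-t},
\]
since \(|u-a|\le 2\) pointwise.

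It remains to turn this into the uniform bound. Set \(g(t)=2(t-N)e^{-t}\) on \([N,\infty)\). Its derivative is \(2e^{-t}(1-(t-N))\), so \(g\) is maximized at \(t=N+1\), where \(g(N+1)=2e^{-N-1}=\frac{2}{e}e^{-N}\). This bound also trivially dominates the value \(0\) on \([0,N]\), which gives the global estimate.

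The only delicate point is conceptual rather than computational. The nearest point of \(Q\) need not be the vertical projection, but we only need an upper bound. We should also check that \(e^{-t}\in(0,1]\), so that \((e^{-t},c,F(e^{-t},c))\) genuinely lies in \(Q\) as defined in \eqref{eq:04.16}. Both points are routine, so I expect no real obstacle beyond the one-variable maximization.
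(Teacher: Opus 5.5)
Your proposal is correct and follows essentially the same route as the paper: bound the distance by the vertical gap $\bigl|y(t)-F(e^{-t},c)\bigr|=e^{-t}\bigl|\int_0^t(u(s)-a(s,c))\,ds\bigr|$, observe that it vanishes for $t\le N$ and is at most $2(t-N)e^{-t}$ for $t\ge N$, and then maximize over $t$. The only cosmetic differences are that you invoke Lemma~\ref{lem:finite-horizon-matching} for $t\le N$ where the paper redoes the one-line computation, and you maximize $g$ by calculus instead of citing $\sup_{x\ge0}xe^{-x}=e^{-1}$.
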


\begin{proof}
As before,
\[
    z(t)=e^{-t},
    \qquad
    c(t)=c,
\]
and the \(y\)-coordinate satisfies
\[
    y(t)
    =
    e^{-t}\int_0^t u(s)\,ds .
\]
On the other hand, by the definition of \(F\),
\[
    F(z(t),c)
    =
    F(e^{-t},c)
    =
    e^{-t}q(t,c)
    =
    e^{-t}\int_0^t a(s,c)\,ds .
\]
Therefore
\[
    y(t)-F(z(t),c)
    =
    e^{-t}\int_0^t \bigl(u(s)-a(s,c)\bigr)\,ds .
\]
Since
\[
    (z(t),c,F(z(t),c))\in Q,
\]
we have
\[
    \operatorname{dist}\bigl((z(t),c,y(t)),Q\bigr)
    \leq
    |y(t)-F(z(t),c)|.
\]

If \(0\leq t\leq N\), then \(u=a(\cdot,c)\) a.e. on \([0,t]\). Hence
\[
    y(t)-F(z(t),c)=0,
\]
and the trajectory lies on \(Q\).

Now let \(t\geq N\). Since the two controls agree a.e. on \([0,N]\), we get
\[
    y(t)-F(z(t),c)
    =
    e^{-t}\int_N^t \bigl(u(s)-a(s,c)\bigr)\,ds .
\]
Both \(u(s)\) and \(a(s,c)\) take values in \([-1,1]\), and therefore
\[
    |u(s)-a(s,c)|\leq 2
    \qquad
    \text{for a.e. }s.
\]
It follows that
\[
    |y(t)-F(z(t),c)|
    \leq
    2e^{-t}(t-N).
\]
Finally, for \(t\geq N\), we have
\[
    2e^{-t}(t-N)
    =
    2e^{-N}(t-N)e^{-(t-N)}
    \leq
    \frac{2}{e}e^{-N},
\]
because \(\sup_{x\geq0}xe^{-x}=e^{-1}\). This proves the estimate.
\end{proof}

\begin{corollary}[Uniform approximation after a finite prefix]
\label{cor:fading_prefix}
For every \(\varepsilon>0\), there exists \(N\in\mathbb N\) such that the
following holds. If \(c\in C\) and
\(u\in L^\infty_{\mathrm{loc}}([0,\infty),[-1,1])\) satisfies \(u(t)=a(t,c)\) for a.e. \(t \in [0,N]\), then the corresponding trajectory satisfies
\begin{equation}
    \operatorname{dist}\bigl((z(t),c(t),y(t)),Q\bigr)
    <\varepsilon
    \qquad
    \text{for all }t\geq0.
\end{equation}
\end{corollary}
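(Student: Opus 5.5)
This corollary follows directly from the Fading estimate just proved. Given \(\varepsilon>0\), we choose \(N\in\mathbb N\) large enough that
\[
    \frac{2}{e}e^{-N}<\varepsilon,
\]
for instance any integer \(N>\log\bigl(2/(e\varepsilon)\bigr)\). Such an integer exists because \(e^{-N}\to0\). The choice depends only on \(\varepsilon\), not on \(c\) or \(u\).

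Now fix any \(c\in C\) and any admissible \(u\) with \(u(t)=a(t,c)\) for a.e. \(t\in[0,N]\). These are exactly the hypotheses of the Fading estimate, with the initial condition \((1,c,0)\) understood as in the corollary's context. That proposition gives the bound \(\operatorname{dist}\bigl((z(t),c(t),y(t)),Q\bigr)\le \frac{2}{e}e^{-N}\) for every \(t\ge0\). By the choice of \(N\), this is strictly less than \(\varepsilon\).

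The only point needing care is uniformity. The quantifiers require \(N\) to be chosen before \(c\), \(u\) and \(t\). The bound in the Fading estimate is independent of all three: the \(t\)-dependence was removed via \(\sup_{x\ge0}xe^{-x}=e^{-1}\). So no real obstacle remains.
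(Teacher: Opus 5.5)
Your proof is correct and is the same as the paper's: you choose \(N\) with \(\frac{2}{e}e^{-N}<\varepsilon\) and apply the uniform, \(t\)-independent bound of the fading estimate. Your remark that \(N\) depends only on \(\varepsilon\), and not on \(c\), \(u\), or \(t\), is exactly the uniformity the corollary requires.
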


\begin{proof}
Choose \(N\in\mathbb N\) so large that
\[
    \frac{2}{e}e^{-N}<\varepsilon .
\]
The conclusion follows directly from the fading estimate.
\end{proof}

\subsection{Hausdorff Convergence}
\label{subsection:hausdorff_convergence}

We verify the Hausdorff convergence statements used in the construction.
Recall that, for compact subsets \(A,B\) of a metric space,
\begin{equation}
    d_H(A,B)
    :=
    \max\left\{
        \sup_{a\in A}\operatorname{dist}(a,B),
        \sup_{b\in B}\operatorname{dist}(b,A)
    \right\}.
\end{equation}

\begin{proposition}[Hausdorff convergence of finite Cantor skeletons]\label{prop:Cn_Hausdorff_convergence}
The finite sets \(C_n\) converge to \(C\) in the Hausdorff metric. More precisely, \(d_H(C_n,C)\leq 3^{-n}\).
\end{proposition}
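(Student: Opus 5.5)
Since \(C_n\subseteq C\), the one-sided distance \(\sup_{a\in C_n}\operatorname{dist}(a,C)\) vanishes. It therefore suffices to show that every \(c\in C\) lies within \(3^{-n}\) of some point of \(C_n\). This reduces the Hausdorff estimate to a covering statement: the \(n\)-th level Cantor intervals \(I_\omega\) cover \(C\), each has length \(3^{-n}\), and each contains a point of \(C_n\).

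Fix \(c\in C\) and let \(\omega:=(\sigma_1(c),\ldots,\sigma_n(c))\) be the first \(n\) symbols of its code. Write \(c=\pi(\sigma(c))\) using the definition \eqref{eq:04.02} and split the series at index \(n\):
\[
c=\sum_{j=1}^{n}\frac{\omega_j+1}{3^j}+\sum_{j=n+1}^{\infty}\frac{\sigma_j(c)+1}{3^j}.
\]
The tail lies in \([0,\sum_{j>n}2\cdot3^{-j}]=[0,3^{-n}]\), so \(c\in I_\omega\) as defined in \eqref{eq:04.08}. The same computation applied to \(\omega^+\) shows that \(c_\omega^+=\pi(\omega^+)\) equals the left endpoint of \(I_\omega\) plus \(\sum_{j>n}2\cdot3^{-j}=3^{-n}\). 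Hence \(c_\omega^+\) is the right endpoint of \(I_\omega\), confirming the remark after \eqref{eq:04.11}. Since both \(c\) and \(c_\omega^+\) lie in an interval of length \(3^{-n}\),
\[
|c-c_\omega^+|\le 3^{-n},
\]
and therefore \(\operatorname{dist}(c,C_n)\le 3^{-n}\).

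Taking the supremum over \(c\in C\) and combining with the vanishing one-sided distance gives \(d_H(C_n,C)\le3^{-n}\to0\). I would then add the immediate consequence for the initial sets: the map \(c\mapsto(1,c,0)\) is an isometry from \(\mathbb R\) into \(\mathbb R^3\) with the Euclidean metric, so \(d_H(K_n,K)=d_H(C_n,C)\le 3^{-n}\), as used in Theorem~\ref{thm:hausdorff-failure}.

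There is no real obstacle here; the only point needing care is the geometric tail-sum bookkeeping, namely that the symbol \(1\) contributes the ternary digit \(2\), so the constant-\(1\) tail realizes the maximal tail value \(3^{-n}\).
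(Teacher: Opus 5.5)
Your proof is correct and takes essentially the same route as the paper. The one-sided distance vanishes because \(C_n\subseteq C\), and for each \(c\in C\) you compare \(c\) with \(c_\omega^+\), where \(\omega\) is its length-\(n\) prefix, bounding the difference by the maximal tail sum \(\sum_{j>n}2\cdot 3^{-j}=3^{-n}\). Placing both points in \(I_\omega\) only repackages the paper's direct tail estimate, and your isometry remark matches Corollary~\ref{cor:Kn_Hausdorff_convergence}.
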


\begin{proof}
Since \(C_n\subseteq C\), we have
\[
    \sup_{c_n\in C_n}\operatorname{dist}(c_n,C)=0.
\]
It remains to estimate the distance from points of \(C\) to \(C_n\). Let
\(c\in C\), and set
\[
    \omega
    :=
    \bigl(\sigma_1(c),\ldots,\sigma_n(c)\bigr)
    \in\{-1,1\}^n.
\]
By construction, \(c_\omega^+\in C_n\), and the symbolic codes of \(c\) and
\(c_\omega^+\) agree in their first \(n\) entries. Therefore
\[
    \operatorname{dist}(c,C_n)
    \leq
    |c-c_\omega^+|.
\]
Using the definition of \(\pi\), we obtain
\[
    |c-c_\omega^+|
    \leq
    \sum_{j=n+1}^{\infty}\frac{2}{3^j}
    =
    3^{-n}.
\]
Taking the supremum over \(c\in C\), we obtain
\[
    \sup_{c\in C}\operatorname{dist}(c,C_n)
    \leq
    3^{-n}.
\]
Combining the two estimates gives \(d_H(C_n,C)\leq 3^{-n}\). In particular, \(C_n\to C\) in the Hausdorff metric.
\end{proof}

\begin{corollary}[Hausdorff convergence of the initial sets]\label{cor:Kn_Hausdorff_convergence}
The sets \(K_n\) converge to \(K\) in the Hausdorff metric.
\end{corollary}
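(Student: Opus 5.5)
The plan is to reduce the statement directly to Proposition~\ref{prop:Cn_Hausdorff_convergence}, using the fact that \(K_n\) and \(K\) are isometric copies of \(C_n\) and \(C\) placed on the same line in \(\mathbb R^3\). Consider the map
\[
    \iota:\mathbb R\longrightarrow\mathbb R^3,
    \qquad
    \iota(c):=(1,c,0).
\]
By \eqref{eq:03.03} and \eqref{eq:03.12}, \(K=\iota(C)\) and \(K_n=\iota(C_n)\). Moreover, \(\iota\) is an isometry onto its image for the Euclidean metric:
\[
    \|\iota(c)-\iota(c')\|=\bigl((1-1)^2+(c-c')^2+(0-0)^2\bigr)^{1/2}=|c-c'|.
\]

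The key step is that the Hausdorff distance is preserved under an isometric embedding, provided both sets lie in the image. For \(c\in C\), every point of \(K_n\) has the form \(\iota(c')\) with \(c'\in C_n\). Hence
\[
    \operatorname{dist}(\iota(c),K_n)=\inf_{c'\in C_n}|c-c'|=\operatorname{dist}(c,C_n).
\]
Symmetrically, \(\operatorname{dist}(\iota(c'),K)=\operatorname{dist}(c',C)\) for every \(c'\in C_n\). Taking suprema in the definition of \(d_H\) gives
\[
    d_H(K_n,K)=d_H(C_n,C)\le 3^{-n}
\]
by Proposition~\ref{prop:Cn_Hausdorff_convergence}. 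Letting \(n\to\infty\) then yields \(K_n\to K\) in the Hausdorff metric.

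There is essentially no obstacle. The only point needing care is that the infimum defining the distance in \(\mathbb R^3\) is taken over points of \(K_n\) (or \(K\)), all of which lie on the line \(\{z=1,\,y=0\}\). This is why the ambient dimension plays no role and the one-dimensional computation transfers exactly. For the statement to make sense we also need \(K_n\) and \(K\) to be compact, which was noted earlier in the paper.
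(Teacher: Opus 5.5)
Your proposal is correct and follows the paper's own argument: both observe that \(c\mapsto(1,c,0)\) preserves Euclidean distances, so \(d_H(K_n,K)=d_H(C_n,C)\le 3^{-n}\) by Proposition~\ref{prop:Cn_Hausdorff_convergence}. You additionally spell out why the infima defining point-to-set distances in \(\mathbb R^3\) reduce to the one-dimensional ones, which the paper leaves implicit.
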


\begin{proof}
Since \(K=\{(1,c,0):c\in C\}\), \(K_n=\{(1,c,0):c\in C_n\}\), and, for all \(c,c'\in C\), \(|(1,c,0)-(1,c',0)|
    =
    |c-c'|\), we have \(d_H(K_n,K)
    =
    d_H(C_n,C)\). By the previous proposition, \(d_H(C_n,C)\leq 3^{-n}\). Hence \(d_H(K_n,K)\leq 3^{-n}\), and consequently \(K_n\to K\) in the Hausdorff metric.
\end{proof}

\subsection{Proofs of Theorems}
\label{subsection:proofs-of-theorems}

\begin{proof}[Proof of Theorem~\ref{thm:main-separation}]
Fix \(T>0\) and put \(m=\lceil T\rceil\). We first prove
\(r_{\mathrm{inv}}^*(T,K,Q)\allowbreak \ge 2^m\).
For each word \(\omega\in\{-1,1\}^m\), choose any \(c\in C\)
whose first \(m\) symbols are \(\omega\).
If one control \(u\) kept two such points, with prefixes \(\omega\) and \(\omega'\), in \(Q\),
then Proposition~\ref{prop:graph_forces_symbolic_matching} would imply
\(u=a(\cdot,c)=a(\cdot,c')\) a.e. on \([0,T]\), where \(c\) and \(c'\) have prefixes \(\omega\) and \(\omega'\), respectively.
If \(\omega\ne\omega'\), let \(j\) be the first differing index.
Since \(j\le m=\lceil T\rceil\), the interval
\([j-1,j)\cap[0,T]\) has positive measure. On that interval the two
reference signals take opposite values, a contradiction.
Hence at least \(2^m\) controls are necessary.

Conversely, for each word \(\omega\in\{-1,1\}^m\), choose an admissible control which realizes \(\omega\) on the first \(m\) unit intervals and is arbitrary afterwards. For each \(c\in C\), assign the control whose first \(m\) symbols agree with those of \(c\). Since \(T\leq m\), this control agrees with \(a(\cdot,c)\) a.e. on \([0,T]\); Lemma~\ref{lem:finite-horizon-matching} therefore keeps the trajectory in \(Q\) on \([0,T]\). Thus
\(r_{\mathrm{inv}}^*(T,K,Q)=2^m\), and so
\(h_{\mathrm{inv}}^*(K,Q)=\log2\).

For ordinary invariance entropy, fix \(\varepsilon>0\). Choose \(N\) as in
Corollary~\ref{cor:fading_prefix}. The \(2^N\) prefix controls form a
\((T,\varepsilon,K,Q)\)-spanning family for every \(T>0\). Hence
\(r_{\mathrm{inv}}(T,\varepsilon,K,Q)\le2^N\) for all \(T\), while
\(r_{\mathrm{inv}}(T,\varepsilon,K,Q)\ge1\). Therefore
\(h_{\mathrm{inv}}(\varepsilon,K,Q)=0\). Since this holds for all
\(\varepsilon>0\), \(h_{\mathrm{inv}}(K,Q)=0\).
\end{proof}

\begin{proof}[Proof of Theorem~\ref{thm:hausdorff-failure}]
By Corollary~\ref{cor:Kn_Hausdorff_convergence}, \(K_n\to K\) in the Hausdorff metric. Fix \(n\). Since \(K_n\) consists of \(2^n\) points, and each point has a prescribed symbolic code, Corollary~\ref{cor:matching_control_stays_on_graph} gives a family of at most \(2^n\) controls which is \((T,K_n,Q)\)-spanning for every \(T>0\). Hence \(1\le r_{\mathrm{inv}}^*(T,K_n,Q)\le 2^n\) for all \(T>0\), and therefore \(h_{\mathrm{inv}}^*(K_n,Q)=\limsup_{T\to\infty}T^{-1}\log r_{\mathrm{inv}}^*(T,K_n,Q)=0\). On the other hand, Theorem~\ref{thm:main-separation} gives \(h_{\mathrm{inv}}^*(K,Q)=\log2\). Thus \(\liminf_{n\to\infty}h_{\mathrm{inv}}^*(K_n,Q)<h_{\mathrm{inv}}^*(K,Q)\).
\end{proof}

\section{Discussion}

The construction should be read against two different backgrounds. Existing positive formulas typically tie invariance entropy to unstable growth or to robust hyperbolic structure, whereas the present example stores information in an exact geometric matching constraint. The Appendix~\ref{appendix:ordinary-linear-comparison} separately shows that ordinary Hausdorff semicontinuity can fail through a conventional expansion mechanism.

\subsection{Positive Regimes for Kawan's Principle}

For linear systems and several smooth or hyperbolic control settings, invariance entropy is described or bounded by unstable eigenvalues, positive Lyapunov data, derivative or volume growth, and unstable determinants \cite{ColoniusKawan2009,Kawan2011ControlSets,Kawan2011UpperLower,Kawan2011LowerBounds,Kawan2013,DaSilvaKawan2016Hyperbolic}. Robust continuity results for uniformly hyperbolic chain control sets additionally use a persistent stable--unstable splitting and structural stability \cite{DaSilvaKawan2016Robustness}. The present compact pair \((K,Q)\) has none of these expansion, accessibility, isolation, or hyperbolicity features, so the counterexample does not contradict those results.

\subsection{Methodological Difference}
\label{subsec:methodological-difference}

Symbolic descriptions, control
words, and finite spanning families already appear in topological feedback
entropy and invariance entropy, where they encode the number of messages or
controls needed to achieve invariance \cite{NairEvansMareelsMoran2004,ColoniusKawan2009}.  Exponential decay and
exponential weights also appear naturally in stabilization and estimation
problems \cite{Colonius2012MinimalBitRates,Kawan2017StateEstimation,ColoniusHamzi2021}.  What is special in our construction is not any one of these ingredients in
isolation, but the way they are combined to separate exact invariance from
fixed-tolerance invariance.

The construction has three roles. The Cantor coordinate stores an infinite binary sequence, the graph \(Q\) forces exact reproduction of that sequence, and the factor \(z(t)=e^{-t}\) exponentially suppresses the geometric effect of late mismatches. Proposition~\ref{prop:graph_forces_symbolic_matching} and Corollary~\ref{cor:fading_prefix} formalize these two complementary effects.

The absence of an expansion mechanism can also be verified directly from the variational system. For a fixed admissible control \(u\), linearization of \eqref{eq:03.02} with respect to the state gives
\[
\frac{d}{dt}
\begin{pmatrix}
\delta z\\[1mm]
\delta c\\[1mm]
\delta y
\end{pmatrix}
=
\begin{pmatrix}
-1&0&0\\
0&0&0\\
u(t)&0&-1
\end{pmatrix}
\begin{pmatrix}
\delta z\\[1mm]
\delta c\\[1mm]
\delta y
\end{pmatrix}.
\]
Its fundamental matrix is
\[
\Phi_u(t)
=
\begin{pmatrix}
e^{-t}&0&0\\
0&1&0\\
e^{-t}\displaystyle\int_0^t u(s)\,ds&0&e^{-t}
\end{pmatrix}.
\]
Because \(|u|\leq1\), the \((z,y)\)-block is bounded by a constant multiple of \((1+t)e^{-t}\), while the \(c\)-direction is neutral. Thus the exponential rates are \(-1,0,-1\), in particular there is no positive Lyapunov exponent for any admissible control. The positive strict entropy in Theorem~\ref{thm:main-separation} is therefore produced by exact symbolic matching in the thin target graph, not by exponential separation of nearby state trajectories.

Theorem~\ref{thm:main-separation} already contains the complete entropy calculation: strict invariance reads one additional symbol per unit time, whereas every fixed positive tolerance sees only a finite prefix. We do not repeat that calculation here.

\subsection{Ordinary Invariance Entropy}
\label{subsection:ordinary_invariance_entropy}

For the Cantor construction, ordinary invariance entropy is zero both for \(K\) and for every finite approximant \(K_n\). The first assertion is part of Theorem~\ref{thm:main-separation}; the second follows because a finite catalogue containing one all-time matching control for each point of \(K_n\) is spanning for every horizon. Hence the strict Hausdorff discontinuity proved above does not produce an ordinary-entropy discontinuity for the same sequence.

Ordinary Hausdorff semicontinuity may nevertheless fail for a different reason. Appendix~\ref{appendix:ordinary-linear-comparison} gives the elementary expanding system \(\dot x=x+u\), in which initial uncertainty grows at rate \(e^T\). That comparison separates the conventional expansion mechanism from the zero-scale symbolic mechanism of the main construction.

\subsection{Data-rate Interpretation}
\label{subsec:data-rate-interpretation}

Let \[B_0(T)=\log_2 r_{\mathrm{inv}}^*(T,K,Q),\]
and let \[
B_\varepsilon(T)=\log_2 r_{\mathrm{inv}}(T,\varepsilon,K,Q)
\] for \(\varepsilon>0\). Theorem~\ref{thm:main-separation} and Corollary~\ref{cor:fading_prefix} imply
\[
\limsup_{T\to\infty}\frac{B_0(T)}{T}=1,
\qquad
\limsup_{T\to\infty}\frac{B_\varepsilon(T)}{T}=0
\quad(\varepsilon>0).
\]
Thus exact invariance requires a persistent stream of one bit per unit time, while any fixed positive tolerance requires only a finite initial message. This operational distinction is the data-rate form of the strict--ordinary separation; it does not rely on a positive state-space Lyapunov exponent.

\section{Conclusion}

This paper provides counterexamples to two long-standing stability expectations for invariance entropy in the absence of additional structural hypotheses. We construct a control system with a compact invariant target pair showing that finite strict invariance entropy does not force equality between strict and ordinary invariance entropy, and that strict invariance entropy need not be lower semicontinuous under unconditional Hausdorff convergence of compact initial sets, with the target set fixed. Thus the expected compatibility between exact invariance, approximate invariance, and finite compact approximation fails in full generality.

The example identifies a zero-scale source of information complexity. The positive strict rate is not produced by dynamical expansion, but by an exact matching constraint encoded in the invariant geometry. This constraint persists in the exact infinite-horizon problem, while it disappears at every fixed positive tolerance and is missed by every finite approximation of the initial set. In this sense, ordinary invariance entropy can collapse symbolic information that remains essential for strict invariance.

These counterexamples also clarify the role of the structural assumptions used in positive results on invariance entropy. Conditions such as isolation, accessibility, hyperbolicity, or expansion-type formulas are not merely technical devices; without additional structure, exact invariance may retain information that approximate invariance cannot detect. A natural next step is to identify sharp hypotheses under which finite strict entropy agrees with ordinary entropy and under which Hausdorff perturbations preserve semicontinuity. Another direction is to develop scale-sensitive invariants that keep track of the joint dependence on time and tolerance before the ordinary entropy limit erases this hidden information.

\section*{Acknowledgments}

The author is deeply grateful to Prof. Lei Guo (AMSS), the author's doctoral advisor, for his guidance and intellectual influence. His work on the quantitative limitations of feedback for uncertain systems strongly shaped the author's interest in information-limited control. In the course of exploring this viewpoint, the author was led to the invariance-entropy and data-rate theory developed by Colonius, Kawan, and their collaborators, and the questions studied in this paper came to the author's attention from that broader perspective. The author also wishes to thank his fellow students Junhe Gong (AMSS) and Kainan Guo (AMSS) for reading an earlier draft of this paper and carefully checking the mathematical proofs.

\appendix

\section{Ordinary Linear Comparison}
\label{appendix:ordinary-linear-comparison}

This appendix records a simple comparison example for ordinary invariance
entropy.  Its purpose is not to provide a new entropy computation: the value
computed below is the one-dimensional case of the linear-system formula of
Colonius and Kawan; see \cite[Theorem~5.1]{ColoniusKawan2009} and
\cite[Section~4.1]{KawanThesis2009}.  The point is only to make explicit the
Hausdorff semicontinuity consequence which is used for comparison in
Subsection~\ref{subsection:ordinary_invariance_entropy}.

\begin{proposition}[Ordinary Linear Comparison]
\label{prop:ordinary-linear-comparison}
Consider the following one-dimensional control system
\begin{equation}
    \dot x=x+u,
    \qquad
    u\in[-1,1],
\end{equation}
on \(M=\mathbb{R}\), and let \(Q=[-1,1]\).  Then \(Q\) is weakly invariant.  If
\(K\subseteq Q\) is compact and has positive Lebesgue measure, then
\begin{equation}
    h_{\mathrm{inv}}(K,Q)=1 .
    \label{eq:ordinary-linear-positive-measure}
\end{equation}
If \(P\subseteq Q\) is finite and nonempty, then
\begin{equation}
    h_{\mathrm{inv}}(P,Q)=0 .
    \label{eq:ordinary-linear-finite-set}
\end{equation}
Consequently, for fixed \(Q=[-1,1]\), the map
\(K\mapsto h_{\mathrm{inv}}(K,Q)\) is neither lower nor upper semicontinuous
with respect to Hausdorff convergence of compact subsets \(K\subseteq Q\).
\end{proposition}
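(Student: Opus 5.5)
The plan has four parts: weak invariance of \(Q\), the value \(h_{\mathrm{inv}}(K,Q)=1\) for \(K\) of positive measure, the value \(h_{\mathrm{inv}}(P,Q)=0\) for finite \(P\), and the semicontinuity conclusions.

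\emph{Weak invariance.} For \(x\in[-1,1]\), the feedback value \(u=-x\in[-1,1]\) gives \(\dot x=0\). So the constant control \(u\equiv -x_0\) keeps \(x_0\) fixed, and \(Q\) is weakly invariant.

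\emph{Finite sets.} The same observation handles \(P\). For each point \(p\in P\), the constant control \(u\equiv-p\) holds \(p\) fixed for all time. The family \(\{u\equiv -p : p\in P\}\) is therefore \((T,P,Q)\)-spanning, and a fortiori \((T,\varepsilon,P,Q)\)-spanning, for every \(T\). Hence \(1\le r_{\mathrm{inv}}(T,\varepsilon,P,Q)\le|P|\), which gives \(h_{\mathrm{inv}}(P,Q)=0\).

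\emph{Upper bound for \(K\).} Here I would invoke the Colonius--Kawan linear formula as cited; the only unstable eigenvalue is \(1\). To be self-contained, I would instead split \(Q\) into \(\lceil e^T\rceil\) intervals of length \(2e^{-T}\). On the interval centred at \(x_i\), use the control \(u(t)=-\phi(t,x_i,0)\) that holds \(x_i\), namely \(u\equiv-x_i\). A point \(x_0\) in that interval then satisfies
\[
x(t)=x_i+e^{t}(x_0-x_i),
\qquad |x(t)-x_i|\le 1 .
\]
This is not quite inside \(Q\), so I would pull the centres inward slightly, for instance use the control \(u=-(1-\delta)x\)-type constants, or simply note that an \(\varepsilon\)-neighborhood is allowed. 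This yields \(r_{\mathrm{inv}}\le Ce^{T}\), so \(h_{\mathrm{inv}}(\varepsilon,K,Q)\le1\) for every \(\varepsilon\).

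\emph{Lower bound for \(K\) (the main obstacle).} I would use a volume argument. Fix \(\varepsilon>0\) and a spanning family \(S\). For each \(u\in S\), let \(A_u\) be the set of \(x_0\in K\) whose trajectory stays in \(N_\varepsilon(Q)=(-1-\varepsilon,1+\varepsilon)\) on \([0,T]\). The time-\(T\) map \(x_0\mapsto e^{T}x_0+\int_0^T e^{T-s}u(s)\,ds\) is affine with derivative \(e^{T}\) and sends \(A_u\) into an interval of length \(2+2\varepsilon\). Hence
\[
\lambda(A_u)\le(2+2\varepsilon)e^{-T}.
\]
Since the sets \(A_u\) cover \(K\), we get
\[
|S|\ge \frac{\lambda(K)\,e^{T}}{2+2\varepsilon},
\]
so \(h_{\mathrm{inv}}(\varepsilon,K,Q)\ge1\). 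Measurability of \(A_u\) is harmless, because \(A_u\) is closed. This step is where positive measure is essential.

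\emph{Semicontinuity.} Take a finite \(P_n\subseteq Q\) with \(P_n\to Q\) in the Hausdorff metric, for example a \(1/n\)-grid. Then
\[
\liminf_{n\to\infty} h_{\mathrm{inv}}(P_n,Q)=0<1=h_{\mathrm{inv}}(Q,Q),
\]
so lower semicontinuity fails. In the other direction, take \(K_n=[-1/n,1/n]\), which converges to \(\{0\}\). Then \(h_{\mathrm{inv}}(K_n,Q)=1\) for all \(n\), while \(h_{\mathrm{inv}}(\{0\},Q)=0\), so upper semicontinuity fails.
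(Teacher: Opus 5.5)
Your weak-invariance argument, the finite-set bound, the volume lower bound and the two semicontinuity examples all match the paper's proof. The gap is in the self-contained upper bound.

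With intervals of length $2e^{-T}$ and the hold-the-centre control $u\equiv -x_i$, the deviation $e^t|x_0-x_i|$ grows to $1$ at $t=T$. It does not stay of size $\varepsilon$. Take the rightmost interval, with centre $x_i=1-e^{-T}$, and $x_0=1$: then $x(T)=x_i+1=2-e^{-T}$, which is at distance about $1$ from $Q$. So your family is not $(T,\varepsilon,K,Q)$-spanning for any $\varepsilon<1$. Noting that an $\varepsilon$-neighbourhood is allowed does not rescue it, since that neighbourhood only tolerates an excursion of size $\varepsilon$. Pulling the centres inward cannot help either. A constant control $u\equiv -q$ gives $x(T)=1+(e^T-1)(1-q)$ from $x_0=1$, so it keeps $x_0=1$ in $(-1-\varepsilon,1+\varepsilon)$ only if $1-q<\varepsilon/(e^T-1)$. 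The suggested ``$u=-(1-\delta)x$-type constants'' just hold a different point and suffer the same problem.

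The repair is the one the paper uses. Take an $\varepsilon e^{-T}/2$-net $\{q_1,\dots,q_N\}\subseteq Q$ with $N\le 4e^T/\varepsilon+2$, and the controls $u_j\equiv -q_j$. Then $|\varphi(t,x,u_j)-q_j|=e^t|x-q_j|\le\varepsilon/2$ on $[0,T]$. The factor $1/\varepsilon$ does no harm because $\varepsilon$ is fixed before $T\to\infty$, so $h_{\mathrm{inv}}(\varepsilon,K,Q)\le 1$ still follows. Your final claim $r_{\mathrm{inv}}\le Ce^T$ is true with $C$ depending on $\varepsilon$, but your construction does not establish it. Citing the Colonius--Kawan linear formula, as you also mention, is an acceptable alternative, since the paper states that the value is the one-dimensional case of that formula.

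There is also a smaller slip in the lower bound: $A_u$ is not closed. Because $N_\varepsilon(Q)$ is open and $[0,T]$ is compact, $A_u=K\cap(\alpha,\beta)$ for an open interval $(\alpha,\beta)$, so it is relatively open in $K$. It is still Borel, so your measure estimate goes through. The paper avoids the issue by working with Lebesgue outer measure.
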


\begin{proof}
First, \(Q\) is weakly invariant.  Indeed, for \(x_0\in Q\), the constant
control \(u(t)\equiv -x_0\) is admissible and the corresponding solution is
identically equal to \(x_0\).  Hence the trajectory remains in \(Q\) for all
positive times.

We next prove \eqref{eq:ordinary-linear-positive-measure}.  For any admissible
control \(u\), the solution is given by
\[
    \varphi(t,x,u)
    =
    e^t x+\int_0^t e^{t-s}u(s)\,ds .
\]

Fix \(T>0\) and \(\varepsilon>0\).  Choose an
\(\varepsilon e^{-T}/2\)-net \(\{q_1,\ldots,q_N\}\subseteq Q\) for \(Q\).  Since
\(Q\) has length \(2\), we may choose such a net with
\[
    N\le \frac{4e^T}{\varepsilon}+2 .
\]
For each \(q_j\), choose the constant control \(u_j(t)\equiv -q_j\).  Since
\(q_j\in[-1,1]\), this control is admissible.  Under this control one has
\[
    \varphi(t,x,u_j)
    =
    q_j+e^t(x-q_j).
\]
Thus, for every \(x\in Q\), there exists \(q_j\) such that \[
|x-q_j|\le \frac{\varepsilon e^{-T}}{2}. 
\] For this index \(j\), and for every \(0\le t\le T\), we obtain \[ 
\operatorname{dist}(\varphi(t,x,u_j),Q) \le |\varphi(t,x,u_j)-q_j| = e^t|x-q_j| \le e^T\frac{\varepsilon e^{-T}}{2} = \frac{\varepsilon}{2} < \varepsilon . 
\] Hence \(\varphi(t,x,u_j)\in N_\varepsilon(Q)\) for all \(0\le t\le T\). Therefore the controls \(u_1,\ldots,u_N\) form a
\((T,\varepsilon)\)-spanning family for \((K,Q)\), for every compact
\(K\subseteq Q\).  Hence
\[
    r_{\mathrm{inv}}(T,\varepsilon,K,Q)
    \le
    \frac{4e^T}{\varepsilon}+2 .
\]
Taking the exponential growth rate in \(T\), we obtain
\[
    h_{\mathrm{inv}}(\varepsilon,K,Q)\le 1 .
\]

Now assume that \(K\subseteq Q\) is compact and has positive Lebesgue measure.
We prove the reverse inequality.  Fix a single admissible control \(u\), and
write
\[
    b_u(T):=\int_0^T e^{T-s}u(s)\,ds .
\]
At time \(T\), the solution satisfies
\[
    \varphi(T,x,u)=e^T x+b_u(T).
\]
Let \(A_u(T,\varepsilon)\) denote the set of all \(x\in K\) for which this
single control keeps the trajectory in \(N_\varepsilon(Q)\) on the whole
interval \([0,T]\).  Then, in particular, the terminal condition must hold:
\[
    e^T x+b_u(T)
    =
    \varphi(T,x,u)
    \in
    (-1-\varepsilon,1+\varepsilon).
\]
Therefore every \(x\in A_u(T,\varepsilon)\) belongs to the interval
\[
    e^{-T}\bigl((-1-\varepsilon,1+\varepsilon)-b_u(T)\bigr),
\]
whose length is \(2(1+\varepsilon)e^{-T}\).  Hence
\[
    \lambda^*\bigl(A_u(T,\varepsilon)\bigr)
    \le
    2(1+\varepsilon)e^{-T},
\]
where \(\lambda^*\) denotes Lebesgue outer measure and \(\lambda\) denotes Lebesgue measure.

Let \(S\) be any finite \((T,\varepsilon)\)-spanning family for \((K,Q)\).  By
definition of \(A_u(T,\varepsilon)\), the sets \(A_u(T,\varepsilon)\) with
\(u\in S\) cover \(K\):
\[
    K
    \subseteq
    \bigcup_{u\in S} A_u(T,\varepsilon).
\]
Since \(K\) is compact and hence Lebesgue measurable, outer measure
subadditivity gives
\[
    \lambda(K)
    \le
    \sum_{u\in S}
    \lambda^*\bigl(A_u(T,\varepsilon)\bigr)
    \le
    \#S\cdot 2(1+\varepsilon)e^{-T}.
\]
Consequently every finite \((T,\varepsilon)\)-spanning family satisfies
\[
    \#S
    \ge
    \frac{\lambda(K)}{2(1+\varepsilon)}e^T .
\]
Taking the minimum over all finite \((T,\varepsilon)\)-spanning families yields
\[
    r_{\mathrm{inv}}(T,\varepsilon,K,Q)
    \ge
    \frac{\lambda(K)}{2(1+\varepsilon)}e^T .
\]
Thus
\[
    h_{\mathrm{inv}}(\varepsilon,K,Q)\ge 1 .
\]
Together with the upper bound, this proves
\[
    h_{\mathrm{inv}}(\varepsilon,K,Q)=1
    \quad\text{for every }\varepsilon>0.
\]
Letting \(\varepsilon\downarrow0\) proves
\eqref{eq:ordinary-linear-positive-measure}.

We now prove \eqref{eq:ordinary-linear-finite-set}. Let \(P=\{x_1,\ldots,x_N\}\subseteq Q\) be finite and nonempty.  For each \(x_j\),
choose the constant control \(u_j(t)\equiv -x_j\).  The trajectory starting from
\(x_j\) is then constant and remains in \(Q\) for all positive times.  Therefore
the \(N\) controls \(u_1,\ldots,u_N\) form a \((T,\varepsilon)\)-spanning family
for \((P,Q)\) for every \(T>0\) and every \(\varepsilon>0\).  Hence
\[
    r_{\mathrm{inv}}(T,\varepsilon,P,Q)
    \le
    N
    \quad\text{for all }T>0,\ \varepsilon>0.
\]
It follows that
\[
    h_{\mathrm{inv}}(\varepsilon,P,Q)=0
    \quad\text{for every }\varepsilon>0,
\]
and therefore \(h_{\mathrm{inv}}(P,Q)=0\).

It remains to spell out the Hausdorff semicontinuity consequences.  Define
\[
    E_n
    :=
    \left\{
        -1+\frac{2j}{n}:j=0,\ldots,n
    \right\}.
\]
Then \(E_n\subseteq Q\), and every point of \(Q=[-1,1]\) lies within distance
\(1/n\) of some point of \(E_n\).  Hence
\[
    d_H(E_n,Q)\le \frac{1}{n}\longrightarrow0.
\]
By \eqref{eq:ordinary-linear-finite-set} and
\eqref{eq:ordinary-linear-positive-measure}, we have
\begin{equation}
    h_{\mathrm{inv}}(E_n,Q)=0
    \quad\text{for all }n,
    \qquad
    h_{\mathrm{inv}}(Q,Q)=1 .
    \label{eq:ordinary-not-lsc}
\end{equation}
If \(K\mapsto h_{\mathrm{inv}}(K,Q)\) were lower semicontinuous at \(Q\), then
\[
    h_{\mathrm{inv}}(Q,Q)
    \le
    \liminf_{n\to\infty}h_{\mathrm{inv}}(E_n,Q),
\]
which contradicts \eqref{eq:ordinary-not-lsc}.  Thus lower semicontinuity
fails.

For upper semicontinuity, let
\[
    J_n=[-1/n,1/n],
    \qquad
    J=\{0\}.
\]
Then \(J_n\subseteq Q\), \(J\subseteq Q\), and
\[
    d_H(J_n,J)=\frac{1}{n}\longrightarrow0.
\]
Each \(J_n\) has positive Lebesgue measure, whereas \(J=\{0\}\) is finite.
Therefore \eqref{eq:ordinary-linear-positive-measure} and
\eqref{eq:ordinary-linear-finite-set} imply
\begin{equation}
    h_{\mathrm{inv}}(J_n,Q)=1
    \quad\text{for all }n,
    \qquad
    h_{\mathrm{inv}}(J,Q)=0 .
    \label{eq:ordinary-not-usc}
\end{equation}
If \(K\mapsto h_{\mathrm{inv}}(K,Q)\) were upper semicontinuous at \(J=\{0\}\),
then
\[
    h_{\mathrm{inv}}(J,Q)
    \ge
    \limsup_{n\to\infty}h_{\mathrm{inv}}(J_n,Q),
\]
which contradicts \eqref{eq:ordinary-not-usc}.  Thus upper semicontinuity also
fails.
\end{proof}

The example above should not be confused with the main construction of the
paper.  Its mechanism is expansion: initial uncertainty is amplified at rate
\(e^T\).  The Cantor construction in the main text has the opposite feature:
tail discrepancies are exponentially faded by the factor \(z(t)=e^{-t}\).
Thus the linear example only illustrates that ordinary invariance entropy may
also fail Hausdorff semicontinuity, while the main construction proves the
strict--ordinary separation and the failure of Hausdorff lower semicontinuity
for strict invariance entropy.

\bibliographystyle{siamplain}
\bibliography{references}

\end{document}